\newtheorem{theorem}{Theorem}[section]
\newtheorem{proposition}{Proposition}[section]
\newtheorem{definition}{Definition}[section]
\newtheorem{remark}{Remark}[section]
\newtheorem{example}{Example}[section]
\begin{document}

\title{Optimal control for a tuberculosis model\\
with reinfection and post-exposure interventions\thanks{This is a preprint 
of a paper whose final and definite form will appear in 
\emph{Mathematical Biosciences}. Paper submitted 9-Dec-2011; 
revised 9-Jun-2012, 13-Jan-2013, and 7-May-2013; 
accepted for publication 9-May-2013.}}

\author{Cristiana J. Silva\\ \texttt{cjoaosilva@ua.pt}
\and Delfim F. M. Torres\\ \texttt{delfim@ua.pt}}

\date{CIDMA -- Center for Research and Development in Mathematics and Applications,\\
Department of Mathematics, University of Aveiro, 3810-193 Aveiro, Portugal}

\maketitle


\begin{abstract}
We apply optimal control theory to a tuberculosis model given
by a system of ordinary differential equations. Optimal control
strategies are proposed to minimize the cost of interventions,
considering reinfection and post-exposure interventions.
They depend on the parameters of the model and reduce effectively
the number of active infectious and persistent latent individuals.
The time that the optimal controls are at the upper bound increase
with the transmission coefficient. A general explicit expression
for the basic reproduction number is obtained and its sensitivity
with respect to the model parameters is discussed. Numerical results
show the usefulness of the optimization strategies.
\end{abstract}

\paragraph{Keywords:} tuberculosis; epidemic model;
optimal control theory; treatment strategies.

\paragraph{MSC 2010:} 92D30 (Primary); 49M05 (Secondary).


\section{Introduction}

\emph{Mycobacterium tuberculosis} is the cause of most occurrences of tuberculosis (TB)
and is usually acquired via airborne infection from someone who has active TB.
It typically affects the lungs (pulmonary TB) but can affect other sites as well
(extrapulmonary TB). Only approximately 10\% of people infected with
\emph{mycobacterium tuberculosis} develop active TB disease. Therefore,
approximately 90\% of people infected remain latent. Latent infected TB people
are asymptomatic and do not transmit TB, but may progress to active TB through
either endogenous reactivation or exogenous reinfection \cite{Small_Fuj_2001,Styblo_1978}.

Without treatment, mortality rates are high, but the anti-TB drugs developed since 1940
dramatically reduce mortality rates (in clinical cases, cure rates of 90\% have
been documented) \cite{WHO_2011}. However, TB remains a major health problem.
In 2010 there were an estimated 8.5 to 9.2 million cases and 1.2 to 1.5 million deaths.
TB is the second leading cause of death from an infectious disease worldwide
after HIV \cite{WHO_2011}.

One can distinguish three types of TB treatment: vaccination to prevent infection;
treatment to cure active TB; treatment of latent TB to prevent endogenous
reactivation \cite{Gomes_etall_2007}.
The treatment of active infectious individuals can have different
timings \cite{Kruk_etall_2008}. Here we consider treatment with
the duration of six months. In these treatments one of the difficulties
to their success is to make sure that the patients complete the treatment.
Indeed, after two months, patients no longer have symptoms of the disease and feel healed,
and many of them stop taking the medicines. When the treatment is not concluded,
the patients are not cured and reactivation can occur and/or the patients
may develop resistent TB. One way to prevent patients of not completing
the treatment is based on supervision and patient support. In fact, this is one
of the measures proposed by the Direct Observation Therapy (DOT)
of World Health Organization (WHO) \cite{WHO_treatTB_2010}.
One example of treatment supervision consists in recording each dose of anti-TB drugs
on the patients treatment card \cite{WHO_treatTB_2010}. These measures
are very expensive since the patients need to stay longer in the hospital
or specialized people are to be payed to supervise patients till
they finish their treatment. On the other hand, it is recognized
that the treatment of latent TB individuals reduces the chances of reactivation,
even if it is still unknown how treatment influences reinfection \cite{Gomes_etall_2007}.

Optimal control is a branch of mathematics developed
to find optimal ways to control a dynamic system
\cite{Cesari_1983,Fleming_Rishel_1975,Pontryagin_et_all_1962}.
While the usefulness of optimal control theory in epidemiology is nowadays well recognized
\cite{livro_Lenhart_2007,Rodrigues_Monteiro_Torres_2010,Rodrigues_Monteiro_Torres_Zinober_2011},
results in tuberculosis are scarce \cite{SLenhart_2002}.
Recently, different optimal control problems applied to TB have been proposed and analyzed
\cite{Bowong_2010,Emvudu_et_all,Haffat_et_all}.
The first paper appeared in 2002 \cite{SLenhart_2002}, and considers a mathematical model
for TB based on \cite{Castillo_Chavez_1997} with two classes of infected and latent individuals
(infected with typical TB and with resistant strain TB) where the aim is to reduce the number
of infected and latent individuals with resistant TB. In \cite{Emvudu_et_all} the model considers
the existence of a class called \emph{the lost to follow up individuals} and they propose optimal
control strategies for the reduction of the number of individuals in this class. In \cite{Haffat_et_all}
the authors adapt a  model from \cite{Castillo_Chavez_2000} where exogenous reinfection is considered
and wish to minimize the number of infectious individuals. In \cite{Bowong_2010} a TB model
that incorporates exogenous reinfection, chemoprophylaxis of latently infected individuals
and treatment of infections is proposed. Optimal control strategies based on chemoprophylaxis
of latently infected individuals and treatment of infectious individuals are analyzed
for the reduction of the number of active infected individuals.
Our aim is to study optimal strategies for the minimization of the number
of active TB infectious and persistent latent individuals, taking into account
the cost of the measures for the treatments of these individuals.
For that, we study the mathematical model for TB dynamics presented
in \cite{Gomes_etall_2007}, where reinfection and post-exposure interventions
are considered. The importance of considering reinfection and post-exposure
interventions is justified in \cite{Bandera_2001,Caminero_2001,Gomes_etall_2007,van_Rie_1999}.
In Section~\ref{sec:TBmodel} we modify the model of \cite{Gomes_etall_2007}
adding two controls $u_1(t)$ and $u_2(t)$, which are functions of time $t$,
and two real positive parameters, $\epsilon_1$ and $\epsilon_2$.
We also explain the meaning of these and the other parameters of the TB model.
A sensitivity analysis for the basic reproduction number
is given in Section~\ref{sec:sensitivity}.
In Section~\ref{sec:opt:cont:prob} we formulate the optimal control problem.
We prove that the problem has an unique solution, and finally we apply to it
the celebrated Pontryagin Maximum Principle \cite{Pontryagin_et_all_1962}.
In Section~\ref{sec:num:results} we propose optimal control strategies,
obtained by numerical simulations, considering several variations
of some of the parameters of the TB model.
We end with Section~\ref{sec:conc} of conclusion.


\section{TB model with controls}
\label{sec:TBmodel}

We study the mathematical model from \cite{Gomes_etall_2007}
where reinfection and post-exposure interventions are considered.
We add to that model two control functions $u_1(\cdot)$ and $u_2(\cdot)$
and two real positive parameters $\epsilon_1$ and $\epsilon_2$.
The resulting model is given by the following system
of nonlinear ordinary differential equations:
\begin{equation}
\label{modelGab_controls}
\begin{cases}
\dot{S}(t) = \mu N - \frac{\beta}{N} I(t) S(t) - \mu S(t)\\
\dot{L_1}(t) = \frac{\beta}{N} I(t)\left( S(t) + \sigma L_2(t)
+ \sigma_R R(t)\right) - (\delta + \tau_1 + \mu)L_1(t)\\
\dot{I}(t) = \phi \delta L_1(t) + \omega L_2(t) + \omega_R R(t)
- (\tau_0 + \epsilon_1 u_1(t) + \mu) I(t)\\
\dot{L_2}(t) = (1 - \phi) \delta L_1(t) - \sigma \frac{\beta}{N} I(t) L_2(t)
- (\omega + \epsilon_2 u_2(t) + \tau_2 + \mu)L_2(t)\\
\dot{R}(t) = (\tau_0 + \epsilon_1 u_1(t) )I(t) +  \tau_1 L_1(t)
+ \left(\tau_2 +\epsilon_2 u_2(t)\right) L_2(t)
- \sigma_R \frac{\beta}{N} I(t) R(t) - \left(\omega_R + \mu\right) R(t) .
\end{cases}
\end{equation}
The population is divided into five categories (\textrm{i.e.},
control system \eqref{modelGab_controls} has five state variables):
susceptible ($S$); early latent ($L_1$), \textrm{i.e.}, individuals recently infected
(less than two years) but not infectious; infected ($I$), \textrm{i.e.},
individuals who have active TB and are infectious;
persistent latent ($L_2$), \textrm{i.e.}, individuals who were infected and remain latent;
and recovered ($R$), \textrm{i.e.}, individuals who were previously infected and treated.
The control $u_1$ represents the effort that prevents the failure of treatment
in active TB infectious individuals $I$, \textrm{e.g.}, supervising the patients,
helping them to take the TB medications regularly and to complete the TB treatment.
The control $u_2$ represents the fraction of persistent latent individuals $L_2$
that is identified and put under treatment.
The parameters $\epsilon_i$, $\epsilon_i \in (0, 1)$, $i=1, 2$,
measure the effectiveness of the controls $u_i$, $i=1, 2$, respectively, \textrm{i.e.},
these parameters measure the efficacy of treatment interventions for active and persistent
latent TB individuals, respectively.

Following \cite{Gomes_etall_2007}, we assume that at birth all individuals are equally susceptible
and differentiate as they experience infection and respective therapy.
Moreover, the total population, $N$, with $N = S + L_1 + I + L_2 + R$, is assumed to be constant,
\textrm{i.e.}, the rate of birth and death, $\mu$, are equal
(corresponding to a mean life time of 70 years \cite{Gomes_etall_2007})
and there are no disease-related deaths. The assumption that
the total population $N$ is constant, allows to reduce
the control system \eqref{modelGab_controls} from five to four state variables.
We decided to maintain the TB model in form \eqref{modelGab_controls},
using relation $S(t) + L_1(t) + I(t) + L_2(t) + R(t) = N$ as a test
to confirm the numerical results. The proportion of population change, in each category,
is described by system \eqref{modelGab_controls}. The initial value of each category,
$S(0)$, $L_1(0)$, $I(0)$, $L_2(0)$ and $R(0)$, are given in Table~\ref{parameters}
and are based on \cite{SLenhart_2002}.

The values of the rates $\delta$, $\phi$, $\omega$, $\omega_R$, $\sigma$ and
$\tau_0$ are taken from \cite{Gomes_etall_2007} and the references cited therein
(see Table~\ref{parameters} for the values of the parameters). The parameter $\delta$
denotes the rate at which individuals leave $L_1$ compartment; $\phi$ is the proportion
of individuals going to compartment $I$; $\omega$ is the rate of endogenous reactivation
for persistent latent infections (untreated latent infections); $\omega_R$ is the rate
of endogenous reactivation for treated individuals
(for those who have undergone a therapeutic intervention).
The parameter $\sigma$ is the factor that reduces the risk of infection,
as a result of acquired immunity to a previous infection,
for persistent latent individuals, \textrm{i.e.}, this factor affects
the rate of exogenous reinfection of untreated individuals;
while $\sigma_R$ represents the same parameter factor but for treated patients.
In our simulations we consider the case where the susceptibility to reinfection
of treated individuals equals that of latents: $\sigma_R = \sigma$.

The parameter $\tau_0$ is the rate of recovery under treatment of active TB
(assuming an average duration of infectiousness of six months).
The parameters $\tau_1$ and $\tau_2$ apply to latent individuals $L_1$ and $L_2$,
respectively, and are the rates at which chemotherapy or a post-exposure vacine is applied.
In \cite{Gomes_etall_2007} different values for these rates are considered:
the case where no treatment of latent infections occur ($\tau_1 = \tau_2 = 0$);
the case where there is an immediate treatment of persistent latent infections ($\tau_2 \to \infty$);
or there is a moderate treatment of persistent latent infections ($\tau_2 \in [0.1,1]$).
The first and second cases are not interesting from the optimal control point of view.
In our paper we consider, without loss of generality, that the rate of recovery of early latent
individuals under post-exposure interventions is equal to the rate of recovery under treatment of active TB,
$\tau_1 = 2 \, yr^{-1}$, and greater than the rate of recovery of persistent latent individuals
under post-exposure interventions, $\tau_2 = 1 \, yr^{-1}$.

It is assumed that the rate of infection of susceptible individuals is proportional
to the number of infectious individuals and the constant of proportionality is $\beta$,
which is the transmission coefficient. The basic reproduction number $R_0$,
for system \eqref{modelGab_controls} in the absence of controls,
i.e., in the case $u_1 = u_2 = 0$,
is proportional to the transmission coefficient $\beta$
(see \cite{Gomes_etall_2007}) and is given by
\begin{equation}
\label{r0:Gab}
R_0 = {\frac {\delta\,
\left( \omega + \phi\, \mu \right)
\left( {\it \omega_R}+\mu \right)}{\left( {\it \omega_R}+{\it \tau_0} + \mu\right)
\left( \delta + \mu\right) \left( \omega + \mu\right) }}
\, \frac{\beta}{\mu}.
\end{equation}
To see that the controls can be used to reduce $R_0$, one just needs
to follow the same procedure used to obtain \eqref{r0:Gab} in \cite{Gomes_etall_2007},
for the general situation where the controls $u_1$ and $u_2$ are present:
\begin{equation}
\label{r0}
R_0(u_1,u_2) =
{\frac {\delta \left[
\left( \omega + \phi\,\mu \right)
\left( {\it \omega_R}+\mu \right)
+ \left({\it \omega_R} + \phi \, \mu \right)\,{\it \epsilon_2} \,{\it u_2}\right]}{
\left( {\it \omega_R}+{\it \tau_0} + \mu +{\it \epsilon_1}\,{\it u_1} \right)
\left( \delta + \mu\right)
\left( \omega + \mu +{\it \epsilon_2}\,{\it u_2}\right) }}
\, \frac{\beta}{\mu}.
\end{equation}
For $u_1 = u_2 = 0$ \eqref{r0} reduces to \eqref{r0:Gab}, i.e., $R_0(0,0) = R_0$.
It should be noted, however, that \eqref{r0:Gab} and \eqref{r0}
are deduced under the assumption that $\tau_1 = \tau_2 = 0$, which
in our investigation is not true: as already mentioned, we consider
$\tau_1 = 2$ and $\tau_2 = 1$ (see also Table~\ref{parameters}).
Therefore, we begin by proving a general formula for $R_0(u_1,u_2)$.

\begin{proposition}
\label{prop:r0}
The basic reproduction number $R_0(u_1,u_2)$ for system \eqref{modelGab_controls} is given by
\begin{equation}
\label{eq:r0:prop}
R_0(u_1,u_2) =
{\frac{{\it \omega_R}\,
\left(  \omega+{\it \tau_2} + \mu + {\it \epsilon_2}\,{\it u_2} \right) \,{\it \tau_1}
+ \delta \left[
\left( \omega + \phi\,\mu \right)
\left( {\it \omega_R}+\mu \right)
+ \left({\it \omega_R} + \phi \, \mu \right)\,\left({\it \tau_2} + {\it \epsilon_2} \,{\it u_2}\right)
\right]}{\left(
{\it \omega_R}+{\it \tau_0} + \mu
+{\it \epsilon_1}\,{\it u_1} \right)
\left(\delta + {\it \tau_1} + \mu \right)
\left(\omega +{\it \tau_2} + \mu + {\it \epsilon_2}\,{\it u_2}\right)}}
\, \frac{\beta}{\mu}.
\end{equation}
\end{proposition}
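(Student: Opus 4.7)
The plan is to derive \eqref{eq:r0:prop} by applying the next-generation matrix method of van den Driessche and Watmough to system \eqref{modelGab_controls}. First I would verify that $(S,L_1,I,L_2,R)=(N,0,0,0,0)$ is the unique disease-free equilibrium, and take $(L_1,I,L_2,R)$ as the infection-related compartments. The recovered compartment $R$ has to be included: it carries no infectiousness, but the endogenous reactivation term $\omega_R R$ feeds individuals back into $I$, and dropping it would break the feedback loop that produces the $\omega_R$-dependent terms appearing in both the numerator and the denominator of \eqref{eq:r0:prop}.

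Next, I would linearise the four infected equations at the disease-free equilibrium and split them into the new-infection part $\mathcal{F}$ and the transition part $\mathcal{V}$. Because the reinfection terms $\sigma(\beta/N)IL_2$ and $\sigma_R(\beta/N)IR$ are bilinear, they vanish to first order, so only $\beta I$ in the $L_1$ equation survives as a new infection. The Jacobian $F$ is therefore the rank-one matrix whose single nonzero entry is $F_{12}=\beta$, while $V$ is the $4\times 4$ matrix with diagonal entries $\delta+\tau_1+\mu$, $\tau_0+\epsilon_1 u_1+\mu$, $\omega+\tau_2+\epsilon_2 u_2+\mu$ and $\omega_R+\mu$, and off-diagonal entries encoding the progression arrows $L_1\to I$ and $L_1\to L_2$, the reactivations $L_2\to I$ and $R\to I$, and the recovery flows from $L_1$, $I$ and $L_2$ into $R$.

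Since $F$ has rank one, $R_0(u_1,u_2)=\rho(FV^{-1})=\beta\,(V^{-1})_{21}$, and the whole task reduces to a single entry of $V^{-1}$. Expanding $\det V$ along the sparse first row and then along the sparse third row of the resulting $3\times 3$ minor should yield the clean factorisation
\[
\det V=\mu(\delta+\tau_1+\mu)(\omega+\tau_2+\epsilon_2 u_2+\mu)(\omega_R+\tau_0+\epsilon_1 u_1+\mu),
\]
which already matches the denominator of \eqref{eq:r0:prop}. Writing $(V^{-1})_{21}=-M_{12}/\det V$ with $M_{12}$ the $(1,2)$-minor of $V$, I would then expand $M_{12}$ and regroup its terms along the three transmission pathways $L_1\to I$, $L_1\to L_2\to I$ and $L_1\to R\to I$ (with the recovery loop closing through $R$) to recover the numerator of \eqref{eq:r0:prop}. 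As consistency checks I would verify that setting $\tau_1=\tau_2=0$ collapses \eqref{eq:r0:prop} into \eqref{r0}, and that additionally $u_1=u_2=0$ gives \eqref{r0:Gab}.

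The main obstacle is not conceptual but bookkeeping: in the ordering $(L_1,I,L_2,R)$ the matrix $V$ is not block-triangular because of the $\omega_R$-coupling between $R$ and $I$, so the $(1,2)$-minor is a genuine $3\times 3$ determinant. It is at this step, when isolating the stand-alone term $\omega_R\tau_1(\omega+\tau_2+\mu+\epsilon_2 u_2)$ generated by the pathway $L_1\to R\to I$ and assembling the combined factor $(\omega+\phi\mu)(\omega_R+\mu)+(\omega_R+\phi\mu)(\tau_2+\epsilon_2 u_2)$ from the remaining pathways, that sign errors and missing cross terms are most likely to creep in.
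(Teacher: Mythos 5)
Your proposal is correct and follows essentially the same route as the paper: both compute $R_0(u_1,u_2)$ as the spectral radius of the next-generation matrix $FV^{-1}$ evaluated at the disease-free equilibrium, the only cosmetic difference being that you restrict to the four infected compartments $(L_1,I,L_2,R)$ while the paper carries the full $5\times 5$ Jacobians $J_{\mathcal F}$ and $J_{\mathcal V}$ including $S$. Your reduction $R_0=\beta\,(V^{-1})_{21}=-\beta M_{12}/\det V$, the stated factorisation of $\det V$, and the regrouping of $-M_{12}$ into $\omega_R\tau_1(\omega+\tau_2+\mu+\epsilon_2 u_2)+\delta\left[(\omega+\phi\mu)(\omega_R+\mu)+(\omega_R+\phi\mu)(\tau_2+\epsilon_2 u_2)\right]$ all check out.
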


\begin{proof}
The system \eqref{modelGab_controls} has only one DFE (disease free equilibrium):
\begin{equation}
\label{eq:DFE}
(S,L_1,I,L_2,R) = (N, 0, 0, 0, 0, 0).
\end{equation}
We calculate the basic reproduction number $R_0(u_1, u_2)$ using the approach
explained with details, \textrm{e.g.}, in \cite{Rodrigues_Monteiro_Torres_Zinober_2011}.
For that we write the right-hand side of system \eqref{modelGab_controls}
as $\mathcal{F} - \mathcal{V}$ with
$$
\mathcal{F} =
\left(
\begin{array}{c}
0 \\
{\frac {\beta\,{\it I}\, \left( S+\sigma\,{\it L_2}+{\it \sigma_R}\,R\right)}{N}}\\
0 \\
0 \\
0 \\
\end{array}
\right)
$$
and
$$
\mathcal{V} =
\left(
\begin{array}{c}
-\mu\,N+{\frac {\beta\,{\it I}\,S}{N}}+\mu\,S\\
\left( \delta+{\it \tau_1}+\mu \right) {\it L_1}\\
-\phi\,\delta\,{\it L_1}-\omega\,{\it L_2}-{\it \omega_R}\,R+ \left( {\it
\tau_0}+{\it \epsilon_1}\,{\it u_1}+\mu \right) {\it I}\\
- \left( 1-\phi \right) \delta\,{\it L_1}+{\frac {\beta\,{\it I}\,
\sigma\,{\it L_2}}{N}}+ \left( \omega+{\it \epsilon_2}\,{\it u_2}+{\it
\tau_2}+\mu \right) {\it L_2} \\
-{\frac {{\it I}\,N{\it \tau_0}+{\it I}\,N{\it \epsilon_1}\,{\it u_1}+{
\it \tau_1}\,{\it L_1}\,N+{\it L_2}\,N{\it \tau_2}+{\it L_2}\,N{\it \epsilon_2}
\,{\it u_2}-{\it \sigma_R}\,\beta\,{\it I}\,R-R N{\it \omega_R}-R N\mu}{N}}
\end{array}
\right).
$$
Then we consider the Jacobian matrices associated with $\mathcal{F}$
and $\mathcal{V}$:
$$
J_{\mathcal{F}} =
 \left[ \begin {array}{ccccc} 0&0&0&0&0\\ \noalign{\medskip}{\frac {
\beta\,{\it I}}{N}}&0&{\frac {\beta\, \left( S+\sigma\,{\it L2}+{\it
\sigma_R}\,R \right) }{N}}&{\frac {\beta\,{\it I}\,\sigma}{N}}&{\frac {
\beta\,{\it I}\,{\it \sigma_R}}{N}}\\ \noalign{\medskip}0&0&0&0&0
\\ \noalign{\medskip}0&0&0&0&0\\ \noalign{\medskip}0&0&0&0&0
\end {array} \right],
$$
$$
J_{\mathcal{V}} =
 \left[ \begin {array}{ccccc} {\frac {\beta\,{\it I}}{N}}+\mu&0&{
\frac {\beta\,S}{N}}&0&0\\ \noalign{\medskip}0&\delta+{\it \tau_1}+\mu&0
&0&0\\ \noalign{\medskip}0&-\phi\,\delta&{\it \tau_0}+{\it \epsilon_1}\,{
\it u_1}+\mu&-\omega&-{\it \omega_R}\\ \noalign{\medskip}0&- \left( 1-
\phi \right) \delta&{\frac {\sigma\,\beta\,{\it L2}}{N}}&{\frac {\beta
\,{\it I}\,\sigma}{N}}+\omega+{\it \epsilon_2}\,{\it u_2}+{\it \tau_2}+\mu
&0\\ \noalign{\medskip}0&-{\it \tau_1}&-{\it \tau_0}-{\it \epsilon_1}\,{\it
u_1}+{\frac {{\it \sigma_R}\,\beta\,R}{N}}&-{\it \tau_2}-{\it \epsilon_2}\,{
\it u_2}&{\frac {\beta\,{\it I}\,{\it \sigma_R}}{N}}+{\it \omega_R}+\mu
\end {array} \right].
$$
The basic reproduction number \eqref{eq:r0:prop}
is obtained as the spectral radius of the matrix
$J_{\mathcal{F}} \times (J_{\mathcal{V}})^{-1}$ at the disease-free
equilibrium \eqref{eq:DFE}.
\end{proof}

\begin{remark}
In the particular case $u_1 = u_2 = 0$, Proposition~\ref{prop:r0}
improves the result of \cite{Gomes_etall_2007}
for arbitrary values of $\tau_1$ and $\tau_2$.
\end{remark}

It is easy to see that the control $u_1$ has a very important role
in the reduction of $R_0$. In particular, using the control $u_1$
we can always diminish the basic reproduction number:
$R_0(u_1,0) \le R_0(0,0)$ (see Proposition~\ref{prop:role:u1:red:r0}).
The endemic threshold $ET$ at $R_0 = 1$ indicates the minimal transmission
potential that sustains endemic disease, \textrm{i.e.}, when $R_0 < 1$
the disease will die out and for $R_0 > 1$ the disease may become endemic.
In Section~\ref{sec:num:results} we take increasing values for $\beta$ with $R_0 > 1$.

\begin{example}
If $\beta = 55$, $\delta = 12$, $\omega = 0.0002$,
$\omega_R = 0.00002$, $\mu = 1/70$, $\phi = 0.05$,
$\epsilon_1 = 0.5$, $\epsilon_2 = 0.5$,
$\tau_0 = 2$, $\tau_1 = 2$ and $\tau_2 = 1$,
then the basic reproduction number $R_0$ without controls
is greater than one ($R(0, 0) > 1.21$) and with
$u_1 = u_2 = 1$ one is able to change $R_0$ to
a desirable situation: the basic reproduction number
is then less than one ($R(1, 1) < 0.97$).
\end{example}

The total simulation duration, $T$, is fixed.
Following \cite[Sec.~5.5]{Styblo_1991}, the risk of developing disease after
infection is much higher in the first five years following infection,
and decline exponentially after that. For this reason
we take $T = 5$, in years.


\section{Sensitivity of the basic reproduction number}
\label{sec:sensitivity}

The sensitivity of the basic reproduction number \eqref{eq:r0:prop}
is an important issue because it determines the model robustness to
parameter values. Two parameters have, in different directions,
a high impact on $R_0$: $u_1$ decreases $R_0$ and $\beta$ increases $R_0$.
The sensitivity of $R_0$ with respect to $u_1$
is given by Proposition~\ref{prop:role:u1:red:r0}.

\begin{proposition}
\label{prop:role:u1:red:r0}
The inequality
$$
R_0(u_1,u_2) \le R_0(0,u_2)
$$
holds for any value of the system parameters.
\end{proposition}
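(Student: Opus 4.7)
The plan is essentially to read off the result directly from the closed-form expression \eqref{eq:r0:prop} that was just proved. My first step would be to inspect where the control $u_1$ appears in the formula: it occurs exactly once, inside the factor $(\omega_R + \tau_0 + \mu + \epsilon_1 u_1)$ of the denominator, and not at all in the numerator or in the other denominator factors $(\delta + \tau_1 + \mu)$ and $(\omega + \tau_2 + \mu + \epsilon_2 u_2)$.

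Second, I would check signs so that the monotonicity argument is clean. Since all biological parameters are positive and $\epsilon_1 \in (0,1)$ with $u_1 \ge 0$ (the control takes nonnegative values), the factor $(\omega_R + \tau_0 + \mu + \epsilon_1 u_1)$ is strictly positive and strictly increasing in $u_1$. The numerator, as a sum of products of positive quantities, is positive and independent of $u_1$, and the remaining denominator factors are positive and independent of $u_1$. Hence $R_0(\cdot, u_2)$, as a function of $u_1$ with $u_2$ fixed, is the ratio of a positive $u_1$-independent quantity by a positive strictly increasing function of $u_1$, so it is nonincreasing (in fact strictly decreasing) in $u_1$.

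The conclusion $R_0(u_1, u_2) \le R_0(0, u_2)$ then follows by taking $u_1 \ge 0$ and comparing with the value at $u_1 = 0$. There is no real obstacle here: the work has already been done in Proposition~\ref{prop:r0}, and the present statement is a structural observation about the dependence on $u_1$. If anything, the only thing to be careful about is simply noting explicitly that $u_1$ does not enter the numerator — this is what isolates its effect to a single monotone factor in the denominator and is the reason $u_1$ can always be used to reduce $R_0$, as remarked in the text preceding the proposition.
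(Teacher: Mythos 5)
Your proposal is correct and follows essentially the same route as the paper: the paper's proof simply notes that a direct calculation from \eqref{eq:r0:prop} gives $\partial R_0(u_1,u_2)/\partial u_1 \le 0$, so $R_0$ decreases with $u_1$. Your version makes the reason for that sign explicit (namely that $u_1$ appears only in a single positive, increasing denominator factor), which is a slightly more transparent presentation of the identical monotonicity argument.
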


\begin{proof}
A direct calculation from \eqref{eq:r0:prop} shows that
$$
\frac{\partial R_0(u_1,u_2)}{\partial u_1} \le 0
$$
for any value of the parameters. Thus,
the basic reproduction number decreases with $u_1$.
\end{proof}

\begin{remark}
The analogous to Proposition~\ref{prop:role:u1:red:r0}
for $u_2$ is not true: the basic reproduction number can decrease or increase
with $u_2$ depending on the system parameters.
\end{remark}

Let us examine now the sensitivity of $R_0$
with respect to $\beta$.

\begin{proposition}
\label{prop:role:beta:red:r0}
The basic reproduction number $R_0(u_1,u_2)$
increases with $\beta$.
\end{proposition}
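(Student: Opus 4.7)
My plan is to read off monotonicity in $\beta$ directly from the closed-form formula \eqref{eq:r0:prop} supplied by Proposition~\ref{prop:r0}. The key observation I would exploit is that $\beta$ appears on the right-hand side of \eqref{eq:r0:prop} only as the multiplicative factor $\beta/\mu$; every other occurrence of the model parameters and of the controls $u_1,u_2$ is collected in a separate rational expression that is independent of $\beta$. So the dependence of $R_0$ on $\beta$ is affine (in fact linear), and monotonicity reduces to a sign check.

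Concretely, I would proceed in three steps. First, rewrite \eqref{eq:r0:prop} as
$$
R_0(u_1,u_2) = C(u_1,u_2)\,\frac{\beta}{\mu},
$$
where $C(u_1,u_2)$ is the $\beta$-free rational factor obtained by stripping $\beta/\mu$ off the right-hand side. Second, verify that $C(u_1,u_2) > 0$ for all admissible parameter values and controls: the denominator of $C$ is a product of three factors each of which is a strictly positive sum of non-negative rates (each contains the strictly positive mortality rate $\mu$), while the numerator contains the strictly positive summand $\delta(\omega+\phi\mu)(\omega_R+\mu)$, all other summands being non-negative. Third, conclude that
$$
\frac{\partial R_0(u_1,u_2)}{\partial \beta} = \frac{C(u_1,u_2)}{\mu} > 0,
$$
so $R_0$ is a strictly increasing function of $\beta$.

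There is essentially no serious obstacle here; the result is a direct corollary of the explicit formula in Proposition~\ref{prop:r0} together with the positivity of the biological parameters stipulated in Section~\ref{sec:TBmodel}. The only point worth taking a moment on is to confirm that the inequality is strict rather than merely weak, which is why I would isolate the distinguished summand $\delta(\omega+\phi\mu)(\omega_R+\mu)$ in the numerator of $C$ and point out that it is bounded below by a positive constant independent of $u_1,u_2$.
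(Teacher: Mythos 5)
Your proposal is correct and follows essentially the same route as the paper: the paper's proof simply asserts that $\frac{\partial R_0(u_1,u_2)}{\partial \beta} > 0$ for all parameter values, and your argument merely spells out why this holds, namely that $\beta$ enters \eqref{eq:r0:prop} linearly through the factor $\beta/\mu$ with a strictly positive $\beta$-free coefficient. Your verification of the sign of that coefficient is a welcome elaboration but not a different method.
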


\begin{proof}
The results follows immediately from the fact that
$$
\frac{\partial R_0(u_1,u_2)}{\partial \beta} > 0
$$
for any value of the parameters.
\end{proof}

The sensitivity of a variable (in our case of interest, $R_0$) with respect to model parameters
is sometimes measured by the so called \emph{sensitivity index}.

\begin{definition}[cf. \cite{Chitnis,Kong}]
\label{def:sense}
The normalized forward sensitivity index of a variable $\upsilon$ that depends
differentiably on a parameter $p$ is defined by
\begin{equation}
\label{eq:def:sense}
\Upsilon_{p}^{\upsilon} := \frac{\partial \upsilon}{\partial p} \times \frac{p}{|\upsilon|}.
\end{equation}
\end{definition}

Note that to the most sensitive parameter $p$ it corresponds a normalized forward sensitivity index
of one or minus one: $\Upsilon_{p}^{\upsilon} = \pm 1$. If $\Upsilon_{p}^{\upsilon} = + 1$,
an increase (decrease) of $p$ by $x \%$ increases (decreases) $\upsilon$ by $x \%$;
if $\Upsilon_{p}^{\upsilon} = - 1$, an increase (decrease) of $p$ by $x \%$ decreases (increases)
$\upsilon$ by $x \%$. From Definition~\ref{def:sense} and Proposition~\ref{prop:r0}, it is easy
to derive the normalized forward sensitivity index of $R_0$ with respect to $\beta$ and $u_1$.

\begin{proposition}
The normalized forward sensitivity index of $R_0$ with respect to $\beta$ is $1$,
that is, $\Upsilon_{\beta}^{R_0} = 1$, while the sensitivity of $R_0$
with respect to $u_1$ is given by
$$
\Upsilon_{u_1}^{R_0}
= -{\frac {{\it \epsilon_1}\,{\it u_1}}{{\it \omega_R}
+{\it \tau_0} + \mu+{\it \epsilon_1}\,{\it u_1}}}.
$$
\end{proposition}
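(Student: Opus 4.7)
The plan is to deduce both indices by direct differentiation of the explicit formula for $R_0(u_1,u_2)$ given in Proposition~\ref{prop:r0}. Definition~\ref{def:sense} reduces each claim to an elementary calculus computation, so the main task is simply to exploit the algebraic structure of \eqref{eq:r0:prop} and avoid grinding through the full expression.

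For $\Upsilon_{\beta}^{R_0}$ I would observe that $\beta$ enters \eqref{eq:r0:prop} only as the trailing linear factor $\beta/\mu$, while every other parameter and the two controls sit inside a coefficient that does not depend on $\beta$. Writing $R_0(u_1,u_2) = C(u_1,u_2)\,\beta/\mu$ makes this transparent: differentiation gives $\partial R_0/\partial \beta = C(u_1,u_2)/\mu = R_0/\beta$, and substituting into \eqref{eq:def:sense} yields $\Upsilon_{\beta}^{R_0} = (R_0/\beta)(\beta/|R_0|) = 1$.

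For $\Upsilon_{u_1}^{R_0}$ I would isolate the $u_1$-dependence of \eqref{eq:r0:prop}. The control $u_1$ appears only inside the factor $(\omega_R + \tau_0 + \mu + \epsilon_1 u_1)$ of the denominator, both in the numerator and in the denominator of the big fraction, so $R_0(u_1,u_2) = D(u_2)/(\omega_R + \tau_0 + \mu + \epsilon_1 u_1)$ for some $D(u_2)$ that is independent of $u_1$. Differentiating gives
$$
\frac{\partial R_0}{\partial u_1}
= -\frac{\epsilon_1\,D(u_2)}{\left(\omega_R + \tau_0 + \mu + \epsilon_1 u_1\right)^{2}}
= -\frac{\epsilon_1}{\omega_R + \tau_0 + \mu + \epsilon_1 u_1}\,R_0,
$$
and multiplying by $u_1/|R_0|$ according to \eqref{eq:def:sense} cancels $R_0$ and produces exactly the stated expression.

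The computation is routine; the only subtlety worth flagging is the appearance of $|R_0|$ in \eqref{eq:def:sense}. Under the modelling assumptions every factor in the numerator and denominator of \eqref{eq:r0:prop} is strictly positive, hence $R_0>0$ and the quotient $R_0/|R_0|$ collapses to $1$ in both calculations. This is the sole step that requires invoking positivity of the parameters, and no further structural argument is needed.
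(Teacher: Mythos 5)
Your proposal is correct and follows essentially the same route as the paper: direct differentiation of the explicit formula \eqref{eq:r0:prop} from Proposition~\ref{prop:r0} and substitution into Definition~\ref{def:sense}. Your explicit factorizations $R_0 = C(u_1,u_2)\,\beta/\mu$ and $R_0 = D(u_2)/(\omega_R+\tau_0+\mu+\epsilon_1 u_1)$, together with the remark that positivity of the parameters lets $R_0/|R_0|$ collapse to $1$, merely spell out details the paper leaves implicit.
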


\begin{proof}
It follows from Proposition~\ref{prop:r0} and \eqref{eq:def:sense}:
\begin{equation*}
\begin{split}
\Upsilon_{\beta}^{R_0} &= \frac{\partial R_0}{\partial \beta} \times \frac{\beta}{|R_0|} = 1,\\
\Upsilon_{u_1}^{R_0} &= \frac{\partial R_0}{\partial u_1} \times \frac{u_1}{|R_0|} =
-{\frac {{\it \epsilon_1}\,{\it u_1}}{{\it \omega_R}+{\it \tau_0} + \mu+{\it \epsilon_1}\,{\it u_1}}}.
\end{split}
\end{equation*}
\end{proof}


\section{Optimal control problem}
\label{sec:opt:cont:prob}

In this section we present the optimal control problem,
describing our goal and the restrictions of the epidemic.
The aim is to find the optimal values $u_1^*$ and $u_2^*$ of the controls
$u_1$ and $u_2$, such that the associated state trajectories
$S^*, L_1^*, I^*, L_2^*, R^*$ are solution of the system
\eqref{modelGab_controls} in the time interval $[0, T]$
with initial conditions $S^*(0), L_1^*(0), I^*(0), L_2^*(0), R^*(0)$,
and minimize the objective functional. Here the objective functional considers
the number of active TB infectious individuals $I$, the number of persistent latent individuals $L_2$,
and the implementation cost of the strategies associated to the controls $u_i$, $i=1, 2$.
The controls are bounded between $0$ and $1$. When the controls vanish, no extra measures
are implemented for the reduction of $I$ and $L_2$; when the controls take the maximum value $1$,
the magnitude of the implemented measures, associated to $u_1$ and $u_2$, take the value
of the effectiveness of the controls, $\epsilon_1$ and $\epsilon_2$, respectively.

We consider the state system \eqref{modelGab_controls}
of ordinary differential equations in $\mathbb{R}^5$
with the set of admissible control functions given by
\begin{equation*}
\Omega = \left\{ (u_1(\cdot), u_2(\cdot)) \in (L^{\infty}(0, T))^2 \,
| \,  0 \leq u_1 (t), u_2(t) \leq 1 ,  \, \forall \, t \in [0, T] \, \right\} .
\end{equation*}
The objective functional is given by
\begin{equation}
\label{costfunction}
J(u_1(\cdot), u_2(\cdot)) = \int_0^{T} \left[ I(t) + L_2(t)
+ \frac{W_1}{2}u_1^2(t) + \frac{W_2}{2}u_2^2(t) \right] dt \, ,
\end{equation}
where the constants $W_1$ and $W_2$ are a measure
of the relative cost of the interventions
associated to the controls $u_1$, $u_2$, respectively
(see Section~\ref{sec:num:results} for further details).
We consider the optimal control problem of determining
$\left(S^*(\cdot), L_1^*(\cdot), I^*(\cdot), L_2^*(\cdot), R^*(\cdot)\right)$,
associated to an admissible control pair
$\left(u_1^*(\cdot), u_2^*(\cdot) \right) \in \Omega$ on the time interval $[0, T]$,
satisfying \eqref{modelGab_controls},
the initial conditions $S(0)$, $L_1(0)$, $I(0)$, $L_2(0)$ and $R(0)$
(see Table~\ref{parameters}) and
minimizing the cost function \eqref{costfunction}, \textrm{i.e.},
\begin{equation}
\label{mincostfunct}
J(u_1^*(\cdot), u_2^*(\cdot))
= \min_{\Omega} J(u_1(\cdot), u_2(\cdot)) \, .
\end{equation}

The existence of optimal controls $\left(u_1^*(\cdot), u_2^*(\cdot)\right)$
comes from the convexity of the cost function \eqref{costfunction}
with respect to the controls and the regularity of the system
\eqref{modelGab_controls} (see, \textrm{e.g.},
\cite{Cesari_1983,Fleming_Rishel_1975} for existence results of optimal solutions).

According to the Pontryagin Maximum Principle \cite{Pontryagin_et_all_1962},
if $(u_1^*(\cdot), u_2^*(\cdot)) \in \Omega$ is optimal for the problem
\eqref{modelGab_controls}, \eqref{mincostfunct} with the initial conditions given
in Table~\ref{parameters} and fixed final time $T$, then there exists
a nontrivial absolutely continuous mapping $\lambda : [0, T] \to \mathbb{R}^5$,
$\lambda(t) = \left(\lambda_1(t), \lambda_2(t),
\lambda_3(t), \lambda_4(t), \lambda_5(t)\right)$,
called \emph{adjoint vector}, such that
\begin{equation*}
\dot{S} = \frac{\partial H}{\partial \lambda_1} \, , \quad
\dot{L}_1= \frac{\partial H}{\partial \lambda_2} \, , \quad
\dot{I}= \frac{\partial H}{\partial \lambda_3} \, , \quad
\dot{L}_2 = \frac{\partial H}{\partial \lambda_4} \, , \quad
\dot{R} = \frac{\partial H}{\partial \lambda_5}
\end{equation*}
and
\begin{equation}
\label{adjsystemPMP}
\dot{\lambda}_1 = -\frac{\partial H}{\partial S} \, , \quad
\dot{\lambda}_2 = -\frac{\partial H}{\partial L_1} \, , \quad
\dot{\lambda}_3 = -\frac{\partial H}{\partial I} \, , \quad
\dot{\lambda}_4 = -\frac{\partial H}{\partial L_2} \, , \quad
\dot{\lambda}_5 = -\frac{\partial H}{\partial R} \, ,
\end{equation}
where function $H$ defined by
\begin{equation*}
\begin{split}
H&= H(S(t), L_1(t), I(t), L_2(t), R(t), \lambda(t), u_1(t), u_2(t)) \\
&=I(t) + L_2(t)  + \frac{W_1}{2}u_1^2(t) + \frac{W_2}{2}u_2^2(t)\\
&\, \, + \lambda_1(t) \left(\mu N - \frac{\beta}{N} I(t) S(t) - \mu S(t) \right)\\
&\, \, + \lambda_2(t) \left( \frac{\beta}{N} I(t)\left( S(t)
+ \sigma L_2(t) + \sigma_R R(t)\right) - (\delta + \tau_1 + \mu)L_1(t) \right)\\
&\, \, + \lambda_3(t) \left(\phi \delta L_1(t) + \omega L_2(t)
+ \omega_R R(t) - (\tau_0 + \epsilon_1 u_1(t) + \mu) I(t) \right)\\
&\, \, + \lambda_4(t) \left((1 - \phi) \delta L_1(t)
- \sigma \frac{\beta}{N} I(t) L_2(t)
- (\omega + \epsilon_2 u_2(t) + \tau_2 + \mu)L_2(t) \right)\\
&\, \,  + \lambda_5(t) \left((\tau_0 + \epsilon_1 u_1(t) )I(t)
+ \tau_1 L_1(t) + (\tau_2 +\epsilon_2 u_2(t)) L_2(t)
- \sigma_R \frac{\beta}{N} I(t) R(t) - (\omega_R + \mu)R(t) \right)
\end{split}
\end{equation*}
is called the \emph{Hamiltonian}, and the minimization condition
\begin{equation}
\label{maxcondPMP}
\begin{split}
H(S^*(t), &L_1^*(t), I^*(t), L_2^*(t), R^*(t),
\lambda^*(t), u_1^*(t), u_2^*(t))\\
&= \min_{0 \leq u_1, u_2 \leq 1}
H(S^*(t), L_1^*(t), I^*(t), L_2^*(t), R^*(t), \lambda^*(t), u_1, u_2)
\end{split}
\end{equation}
holds almost everywhere on $[0, T]$. Moreover, the transversality conditions
\begin{equation*}
\lambda_i(T) = 0, \quad
i =1,\ldots, 5 \, ,
\end{equation*}
hold.

\begin{theorem}
\label{the:thm}
Problem \eqref{modelGab_controls}, \eqref{mincostfunct} with fixed initial conditions
$S(0)$, $L_1(0)$, $I(0)$, $L_2(0)$ and $R(0)$ and fixed final time $T$, admits an unique
optimal solution $\left(S^*(\cdot), L_1^*(\cdot), I^*(\cdot), L_2^*(\cdot), R^*(\cdot)\right)$
associated to an optimal control pair $\left(u_1^*(\cdot), u_2^*(\cdot)\right)$ on $[0, T]$.
Moreover, there exists adjoint functions $\lambda_1^*(\cdot)$, $\lambda_2^*(\cdot)$,
$\lambda_3^*(\cdot)$, $\lambda_4^*(\cdot)$ and $\lambda_5^*(\cdot)$ such that
\begin{equation}
\label{adjoint_function}
\begin{cases}
\dot{\lambda^*_1}(t) = \lambda^*_1(t) \left(\frac{\beta}{N} I^*(t)
+ \mu \right) - \lambda^*_2(t) \frac{\beta}{N} I^*(t) \\[0.1 cm]
\dot{\lambda^*_2}(t) = \lambda^*_2(t)\left(\delta + \tau_1 + \mu\right)
- \lambda^*_3(t) \phi \delta - \lambda^*_4(t) (1 - \phi) \delta
- \lambda^*_5(t)\tau_1 \\[0.1 cm]
\dot{\lambda^*_3}(t) = -1 + \lambda^*_1(t) \frac{\beta}{N} S^*(t)
- \lambda^*_2(t) \frac{\beta}{N}(S^*(t) + \sigma L_2^*(t) + \sigma_R R^*(t))\\
\qquad \quad + \lambda^*_3(t) \left(\tau_0 +\epsilon_1 u_1^*(t) + \mu\right)
+\lambda^*_4(t)\sigma \frac{\beta}{N} L_2^*(t)
- \lambda^*_5(t)\left(\tau_0 + \epsilon_1 u^*_1(t)
- \sigma_R \frac{\beta}{N} R^*(t) \right) \\[0.1 cm]
\dot{\lambda^*_4}(t) = - 1 - \lambda^*_2(t) \frac{\beta}{N}I^*(t)
\sigma - \lambda^*_3(t) \omega + \lambda^*_4(t)\left(\sigma \frac{\beta}{N} I^*(t)
+ \omega + \epsilon_2 u^*_2(t) + \tau_2 + \mu\right)\\
\qquad \quad - \lambda^*_5(t)\left( \tau_2 + \epsilon_2 u^*_2(t) \right) \\[0.1 cm]
\dot{\lambda^*_5}(t) = -\lambda^*_2(t) \sigma_R \frac{\beta}{N}I^*(t)
- \lambda^*_3(t) \omega_R + \lambda^*_5(t)\left(\sigma_R \frac{\beta}{N} I^*(t)
+\omega_R + \mu\right) \, ,
\end{cases}
\end{equation}
with transversality conditions
\begin{equation*}
\lambda^*_i(T) = 0,
\quad i=1, \ldots, 5 \, .
\end{equation*}
Furthermore,
\begin{equation}
\label{optcontrols}
\begin{split}
u_1^*(t) &= \min \left\{ \max \left\{0, \frac{\epsilon_1 I^*
\left(\lambda^*_3 - \lambda^*_5\right)}{W_1}\right\}, 1 \right\} \, ,\\
u_2^*(t) &= \min \left\{ \max \left\{0, \frac{\epsilon_2 L^*_2
\left(\lambda^*_4 - \lambda^*_5\right)}{W_2}\right\}, 1 \right\}  \, .
\end{split}
\end{equation}
\end{theorem}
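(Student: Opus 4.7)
The plan is to establish the theorem in four moves. \emph{Existence} of an optimal pair $(u_1^*,u_2^*)$ follows from the classical Filippov--Cesari-type theorem in \cite{Cesari_1983,Fleming_Rishel_1975}: the admissible set is nonempty because $u_1\equiv u_2\equiv 0$ yields a well-defined classical solution on $[0,T]$ (the conservation law $S+L_1+I+L_2+R\equiv N$ gives an a priori bound on the state); the control set $[0,1]^2$ is compact and convex; the right-hand side of \eqref{modelGab_controls} is affine in $(u_1,u_2)$ and locally Lipschitz in the state on the invariant simplex; and the integrand of \eqref{costfunction} is strictly convex in $(u_1,u_2)$ with the quadratic penalty providing coercivity. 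The authors already flag this step as standard, so I would keep it brief.

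Next, I would \emph{derive the adjoint system} by direct application of the Pontryagin Maximum Principle \eqref{adjsystemPMP} to the Hamiltonian $H$ spelled out in the excerpt. Because the terminal states are free and the final time is fixed, the transversality conditions reduce to $\lambda_i^*(T)=0$, $i=1,\dots,5$, with no additional terms. Computing $-\partial H/\partial S,\ldots,-\partial H/\partial R$ term by term reproduces exactly the five equations listed in \eqref{adjoint_function}; this step is routine book-keeping and I would present only the derivatives that involve several incoming terms, e.g.\ $\dot\lambda_3^*$, where contributions from the $\lambda_1$, $\lambda_2$, $\lambda_3$, $\lambda_4$, $\lambda_5$ lines of $H$ all merge.

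For the \emph{characterization of the controls}, observe that $H$ is strictly convex in $(u_1,u_2)$ with Hessian $\mathrm{diag}(W_1,W_2)$ and $W_i>0$, so the pointwise minimization \eqref{maxcondPMP} has a unique solution on the box $[0,1]^2$. Setting $\partial H/\partial u_1=0$ gives the unconstrained minimizer $\epsilon_1 I^*(\lambda_3^*-\lambda_5^*)/W_1$, and analogously $\epsilon_2 L_2^*(\lambda_4^*-\lambda_5^*)/W_2$ for $u_2$; projecting onto $[0,1]$ via the $\min\{\max\{0,\cdot\},1\}$ operator yields \eqref{optcontrols}. The strict convexity in $(u_1,u_2)$ simultaneously shows that any PMP extremal determines its controls uniquely from $(x^*,\lambda^*)$.

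\emph{Uniqueness} of the optimal triple is the step I expect to be the main obstacle. After substituting \eqref{optcontrols} into \eqref{modelGab_controls} and \eqref{adjoint_function}, one obtains a closed ten-dimensional ODE system for $(x^*,\lambda^*)$ with split boundary conditions (initial data for $x^*$, terminal data for $\lambda^*$). The truncation $\min\{\max\{0,\cdot\},1\}$ is globally $1$-Lipschitz, the states live in a bounded invariant region, and the adjoint equations are linear in $\lambda$ with bounded coefficients, so a global Lipschitz constant $K$ is available. Given two optimal triples, I would difference the coupled systems and apply a Gr\"onwall estimate to the energy $\|x-\bar x\|^2+\|\lambda-\bar\lambda\|^2$, using $x(0)-\bar x(0)=0$ and $\lambda(T)-\bar\lambda(T)=0$; this forces the two triples to coincide provided $T$ lies below an explicit threshold determined by $K$. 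Reaching the prescribed $T=5$ requires either a refinement of the Lipschitz estimate exploiting the smallness of several model parameters, or an appeal to the standard uniqueness argument for coupled state--adjoint systems with quadratic cost, as carried out for bilinear epidemic models in \cite{livro_Lenhart_2007}; this is the step I would scrutinize most carefully.
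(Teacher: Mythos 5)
Your proposal follows essentially the same route as the paper's proof: existence from convexity of the integrand in the controls together with the Lipschitz property of the state system, the adjoint system \eqref{adjoint_function} and transversality conditions from the Pontryagin Maximum Principle via \eqref{adjsystemPMP}, the characterization \eqref{optcontrols} from the pointwise minimization condition \eqref{maxcondPMP} on the strictly convex Hamiltonian, and uniqueness for small $T$ from boundedness and Lipschitz estimates on the coupled state--adjoint system, with reference to \cite{SLenhart_2002}. The only divergence is at the last step: where you candidly flag that reaching $T=5$ requires an additional argument, the paper disposes of the small-$T$ restriction in a one-line remark appealing to the autonomy of system \eqref{modelGab_controls}, so your scrutiny of that point is, if anything, more careful than the published treatment.
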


\begin{proof}
Existence of an optimal solution $\left(S^*, L_1^*, I^*, L_2^*, R^*\right)$
associated to an optimal control pair $\left(u_1^*, u_2^*\right)$ comes from
the convexity of the integrand of the cost function $J$ with respect
to the controls $(u_1, u_2)$ and the Lipschitz property of the state system
with respect to state variables $\left(S, L_1, I, L_2, R\right)$
(see, \textrm{e.g.}, \cite{Cesari_1983,Fleming_Rishel_1975}).
System \eqref{adjoint_function} is derived from the Pontryagin maximum principle
(see \eqref{adjsystemPMP}, \cite{Pontryagin_et_all_1962})
and the optimal controls \eqref{optcontrols} come from the minimization condition \eqref{maxcondPMP}.
For small final time $T$, the optimal control pair given by \eqref{optcontrols}
is unique due to the boundedness of the state and adjoint functions and the Lipschitz property
of systems \eqref{modelGab_controls} and \eqref{adjoint_function}
(see \cite{SLenhart_2002} and references cited therein).
\end{proof}

\begin{remark}
Due to the fact that the state system \eqref{modelGab_controls} is autonomous,
the proof of Theorem~\ref{the:thm} is valid for any time $T$
and not only for small time $T$.
\end{remark}


\section{Numerical results and discussion}
\label{sec:num:results}

In this section we present results of the numerical implementation
of optimal control strategies for the TB model \eqref{modelGab_controls}
with cost functional \eqref{costfunction}.
We consider variations of some parameters separately,
and interpret the obtained optimal controls
and associated optimal state solutions.

Different approaches were used to obtain and confirm the numerical results.
One approach consisted in using IPOPT \cite{IPOPT}
and the algebraic modeling language AMPL \cite{AMPL}.
A second approach was to use the PROPT Matlab Optimal Control Software \cite{PROPT}.
The results coincide with the ones obtained by an iterative method
that consists in solving the system of ten ODEs given by
\eqref{modelGab_controls} and \eqref{adjoint_function}.
For that, first we solve system \eqref{modelGab_controls}
with a guess for the controls over the time interval
$[0, T]$ using a forward fourth-order Runge--Kutta scheme
and the transversality conditions $\lambda_i(T) = 0$, $i=1, \ldots, 5$.
Then, system \eqref{adjoint_function} is solved by
a backward fourth-order Runge--Kutta scheme using the current
iteration solution of \eqref{modelGab_controls}.
The controls are updated by using a convex combination of the previous controls
and the values from \eqref{optcontrols}. The iteration is stopped when the values of the unknowns
at the previous iteration are very close to the ones at the present iteration.
For more details see, \emph{e.g.}, \cite{SLenhart_2002}.

\begin{table}[!htb]
\centering
\begin{tabular}{|l | l | l |}
\hline
{\scriptsize{Symbol}} & {\scriptsize{Description}}  & {\scriptsize{Value}} \\
\hline
{\scriptsize{$\beta$}} & {\scriptsize{Transmission coefficient}}
& {\scriptsize{$75, 100, 150, 175$ }}\\
{\scriptsize{$\mu$}} & {\scriptsize{Death and birth rate}}
& {\scriptsize{ $1/70 \, yr^{-1}$}}\\
{\scriptsize{$\delta$}} & {\scriptsize{Rate at which individuals leave $L_1$}}
& {\scriptsize{$12 \, yr^{-1}$}}\\
{\scriptsize{$\phi$}} & {\scriptsize{Proportion of individuals going to $I$}}
& {\scriptsize{$0.05$}}\\
{\scriptsize{$\omega$}} & {\scriptsize{Rate of endogenous reactivation for persistent latent infections}}
& {\scriptsize{$0.0002 \, yr^{-1}$}}\\
{\scriptsize{$\omega_R$}} & {\scriptsize{Rate of endogenous reactivation for treated individuals}}
&{\scriptsize{ $0.00002 \, yr^{-1}$}}\\
{\scriptsize{$\sigma$}} & {\scriptsize{Factor reducing the risk of infection as a result of acquired}}  & \\
& {\scriptsize{immunity to a previous infection for $L_2$}} & {\scriptsize{$0.25$}} \\
{\scriptsize{$\sigma_R$}} & {\scriptsize{Rate of exogenous reinfection of treated patients}}
& {\scriptsize{0.25}} \\
{\scriptsize{$\tau_0$}} & {\scriptsize{Rate of recovery under treatment of active TB}}
&  {\scriptsize{$2 \, yr^{-1}$}}\\
{\scriptsize{$\tau_1$}} & {\scriptsize{Rate of recovery under treatment of latent individuals $L1$}}
&  {\scriptsize{$2 \, yr^{-1}$}}\\
{\scriptsize{$\tau_2$}} & {\scriptsize{Rate of recovery under treatment of latent individuals $L2$}}
&  {\scriptsize{$1 \, yr^{-1}$}}\\
{\scriptsize{$N$}} & {\scriptsize{Total population}} & {\scriptsize{$10000, 15000, 30000$}} \\
{\scriptsize{$S(0)$}} & {\scriptsize{Initial number of susceptible individuals}}
& {\scriptsize{$\frac{76}{120}N$}}  \\
{\scriptsize{$L_1(0)$}} & {\scriptsize{Initial number of early latent $L_1$ individuals}}
& {\scriptsize{$\frac{37}{120}N$}}\\
{\scriptsize{$I(0)$}} & {\scriptsize{Initial number of infectious individuals}}
&  {\scriptsize{$\frac{4}{120}N$}}\\
{\scriptsize{$L_2(0)$}} & {\scriptsize{Initial number of persistent latent $L_2$ individuals}}
& {\scriptsize{$\frac{2}{120}N$}} \\
{\scriptsize{$R(0)$}} & {\scriptsize{Initial number of recovered individuals}}
& {\scriptsize{$\frac{1}{120}N$}} \\
{\scriptsize{$T$}} & {\scriptsize{Total simulation duration}} & {\scriptsize{5 yr}} \\
{\scriptsize{$\epsilon_1$}} & {\scriptsize{Efficacy of treatment of active TB $I$}}
& {\scriptsize{$0.25, 0.5, 0.75$}} \\
{\scriptsize{$\epsilon_2$}} & {\scriptsize{Efficacy of treatment of latent TB $L_2$}}
& {\scriptsize{$0.25, 0.5, 0.75$}} \\
{\scriptsize{$W_1$}} & {\scriptsize{Weight constant on control $u_1(t)$}}
& {\scriptsize{$150, 250, 500$}}\\
{\scriptsize{$W_2$}} & {\scriptsize{Weight constant on control $u_2(t)$}}
& {\scriptsize{$50, 150, 250$}}\\
\hline
\end{tabular}
\caption{Parameter values.}
\label{parameters}
\end{table}

We start comparing the case of minimizing the number of infectious
and persistent latent individuals, $I + L_2$, with and without controls.
We consider $\beta = 100$, $N=30000$, $\epsilon_1 = \epsilon_2 = 0.5$,
$W_1 = 500$, $W_2 = 50$, and the values of the remaining parameters
are presented in Table~\ref{parameters}.
For these parameter values, $R_0(0, 0) = 2.2$ and $R_0(1, 1) = 1.76$.
In Figure~\ref{fig:com:e:sem:controlos} we observe that the fraction
of active infectious and persistent latent individuals
is lower when controls are considered. More precisely, at the end of five years,
the total number of infectious and persistent latent individuals $I + L_2$
is 320 when controls are considered, and 1550 without controls.
To minimize the total number of infectious and persistent latent individuals,
the optimal control $u_1$ is at the upper bound during 2.3 years and
then, during the remaining 2.7 years, it decreases to the lower bound.
The control $u_2$ is at the upper bound during almost 4.7 years
(see Figure~\ref{fig:u1u2:com:e:sem:controlos}).

\begin{figure}[!htb]
\centering
\includegraphics[width=0.5\textwidth]{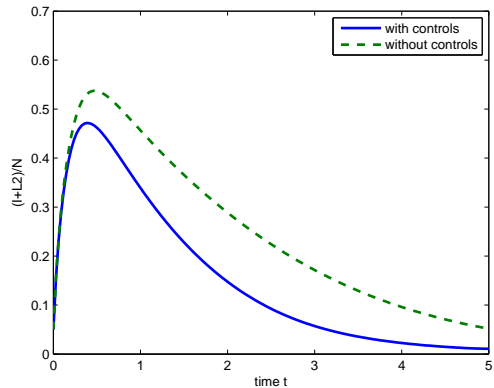}
\caption{$(I + L_2)/N$ with and without controls
for $\beta = 100$, $N=30000$, $\epsilon_1 = \epsilon_2 = 0.5$, $W_1 = 500$ and $W_2 = 50$.}
\label{fig:com:e:sem:controlos}
\end{figure}

\begin{figure}[!htb]
\centering
\includegraphics[width=0.5\textwidth]{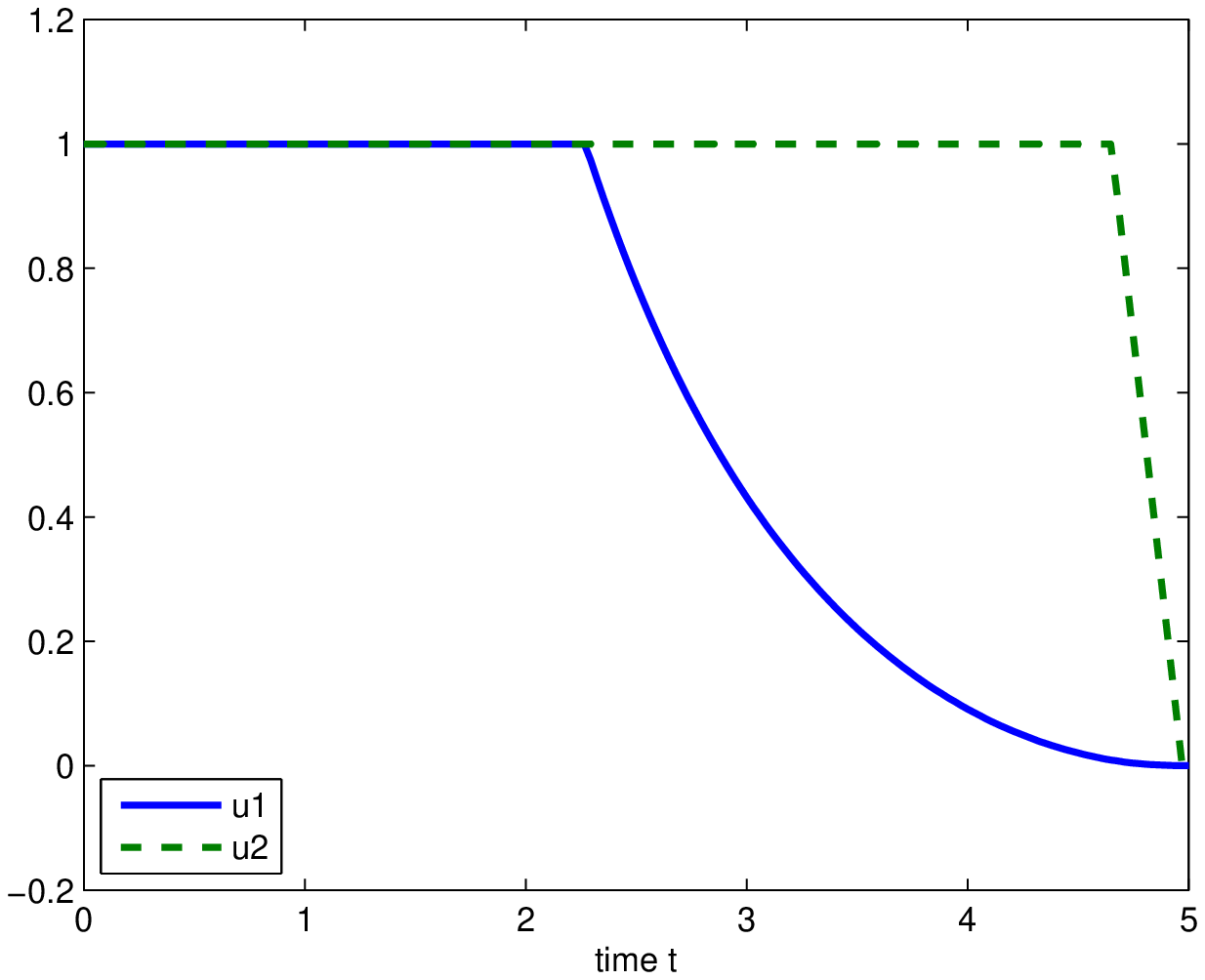}
\caption{Controls $u_1$ and $u_2$ for $\beta =100$, $N=30000$,
$\epsilon_1 = \epsilon_2 = 0.5$, $W_1 = 500$, $W_2 = 50$.}
\label{fig:u1u2:com:e:sem:controlos}
\end{figure}

We test the relevance of the optimal control strategies, given by controls
$u_1$ and $u_2$, in the reduction of the fraction of active infected individuals
$I/N$ and persistent latent individuals $L_2/N$. We observe in
Figure~\ref{fig:I:L2:com:e:sem:controlos} that both fractions $I/N$ and $L_2/N$
are lower when controls are considered, that is, we can conclude that the
implementation of the measures to prevent the failure of treatment
in active TB infectious individuals $I$ and the increase of number
of persistent latent individuals $L_2$, that are identified
and put under treatment, are good policies for the achievement
of our goal. Some of the policies associated to the control $u_1$ are
the supervision and the support of active TB infectious individuals $I$.
It is important to ensure that active TB infectious individuals $I$ complete the treatment,
which is difficult due to its duration and second effects.
The supervision can be made, however, not only in hospitals but also paying to specialized people
to go to patients home. This implies higher monetary cost, that is,
greater values for $W_i$, which is illustrated
in Figures~\ref{fig:variar:W2}--\ref{fig:W1igualW2}.

\begin{figure}[!htb]
\centering
\subfloat[\footnotesize{$I/N$ with and without controls}]{\label{fig:I:com:e:sem:controlos}
\includegraphics[width=0.45\textwidth]{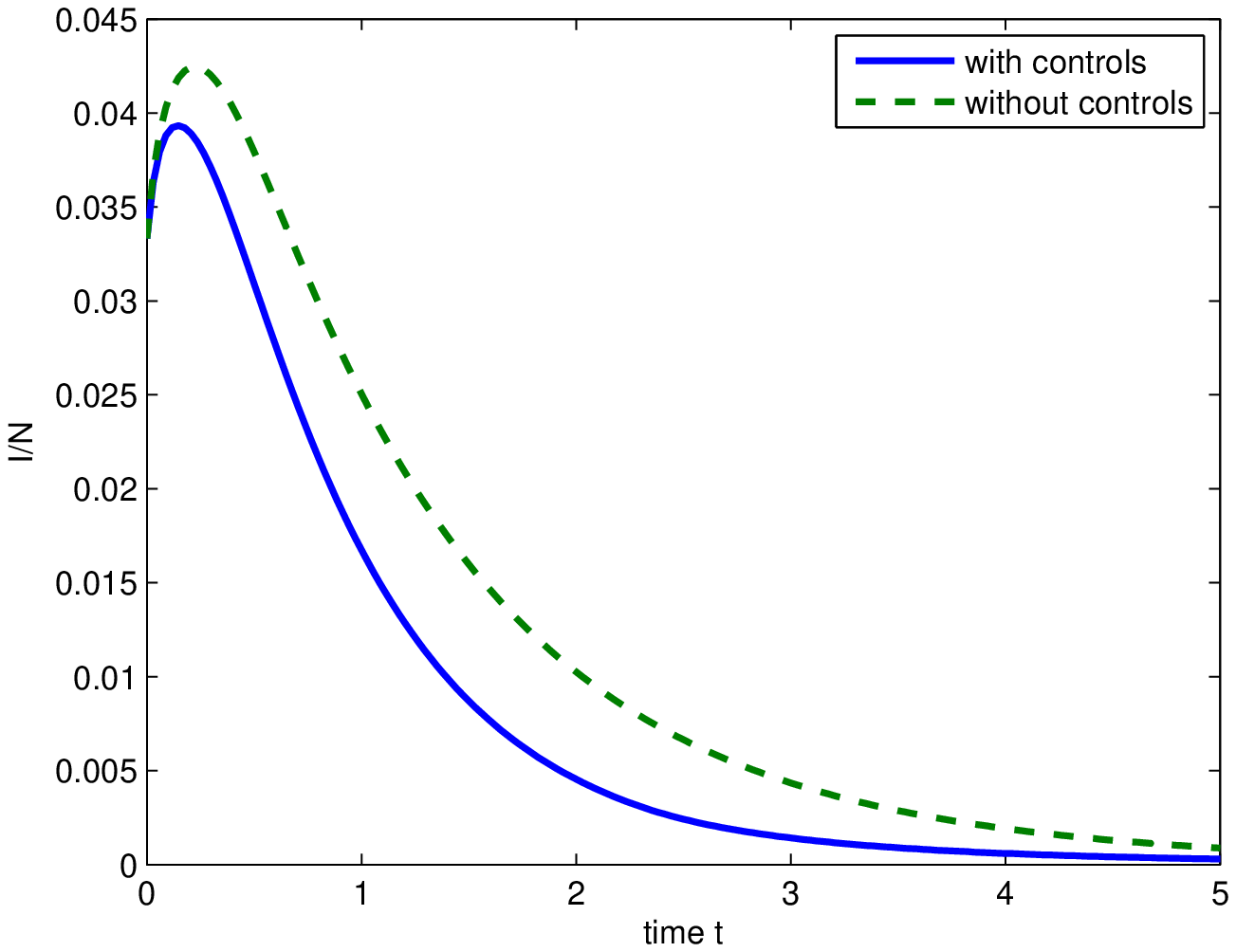}}
\subfloat[\footnotesize{$L_2/N$ with and without controls}]{\label{fig:L2:com:e:sem:controlos}
\includegraphics[width=0.45\textwidth]{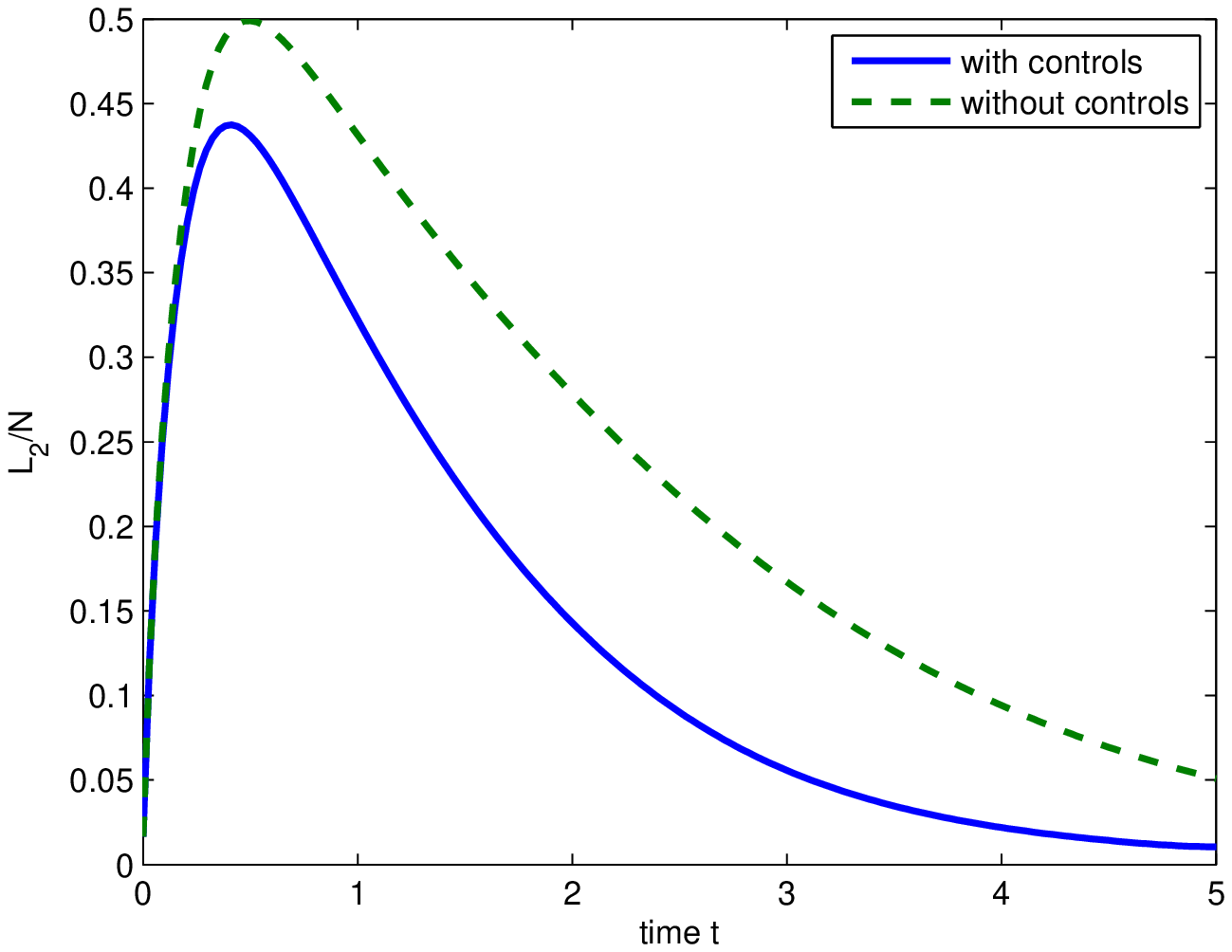}}
\caption{$I/N$ and $L_2/N$, with and without controls,
for $\beta = 100$, $N=30000$, $\epsilon_1 = \epsilon_2 = 0.5$, $W_1 = 500$ and $W_2 = 50$.}
\label{fig:I:L2:com:e:sem:controlos}
\end{figure}

In what follows, we consider always strategies using controls
$u_1(t)$ and $u_2(t)$ with $t \in [0, 5]$.
Figure~\ref{fig:variar:beta} illustrates how optimal control
strategies change as the transmission coefficient parameter $\beta$ varies.
We consider four different values for $\beta$, $75$, $100$, $150$ and $175$,
the other parameters taking the values $N=30000$, $\epsilon_1 = \epsilon_2 = 0.5$,
$W_1 = 500$, and $W_2 = 50$ (the values of the remaining parameters
are presented in Table~\ref{parameters}). All the values that $\beta$
takes correspond to the case where the disease may become endemic, \textrm{i.e.},
$R_0 > 1$. We observe that as the parameter $\beta$ increases, the control $u_1$
is at the upper bound for a longer period of time, but the variation on the control
$u_2$ is not so significant. In Figure~\ref{fig:variar:beta} (c) one can see that
as $\beta$ decreases the fraction of infectious and persistent
latent individuals also decreases, as expected.

\begin{figure}[!htb]
\centering
\subfloat[\footnotesize{Control $u_1$}]{\label{fig:variarbeta:u1}
\includegraphics[width=0.32\textwidth]{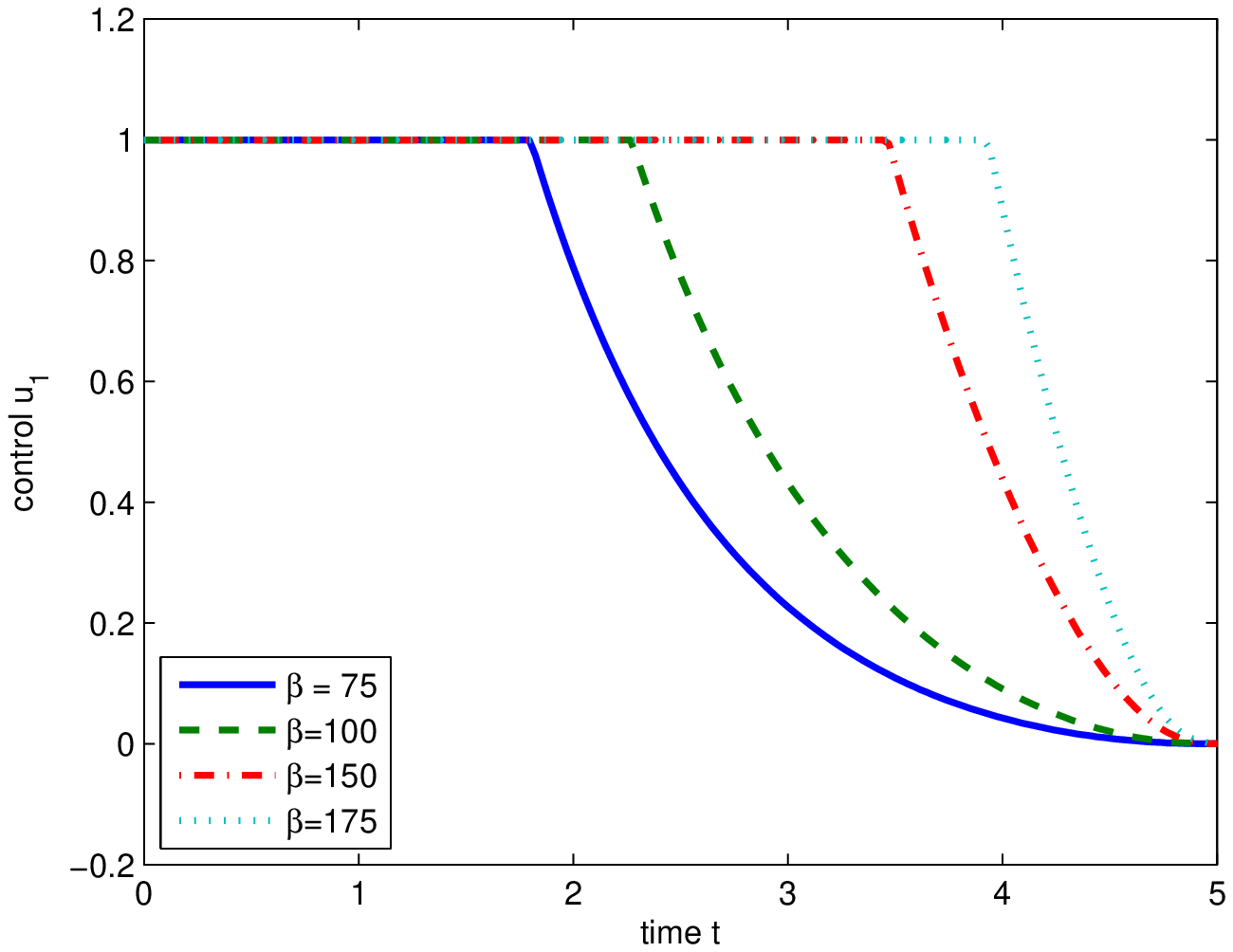}}
\subfloat[\footnotesize{Control $u_2$}]{\label{fig:variarbeta:u2}
\includegraphics[width=0.32\textwidth]{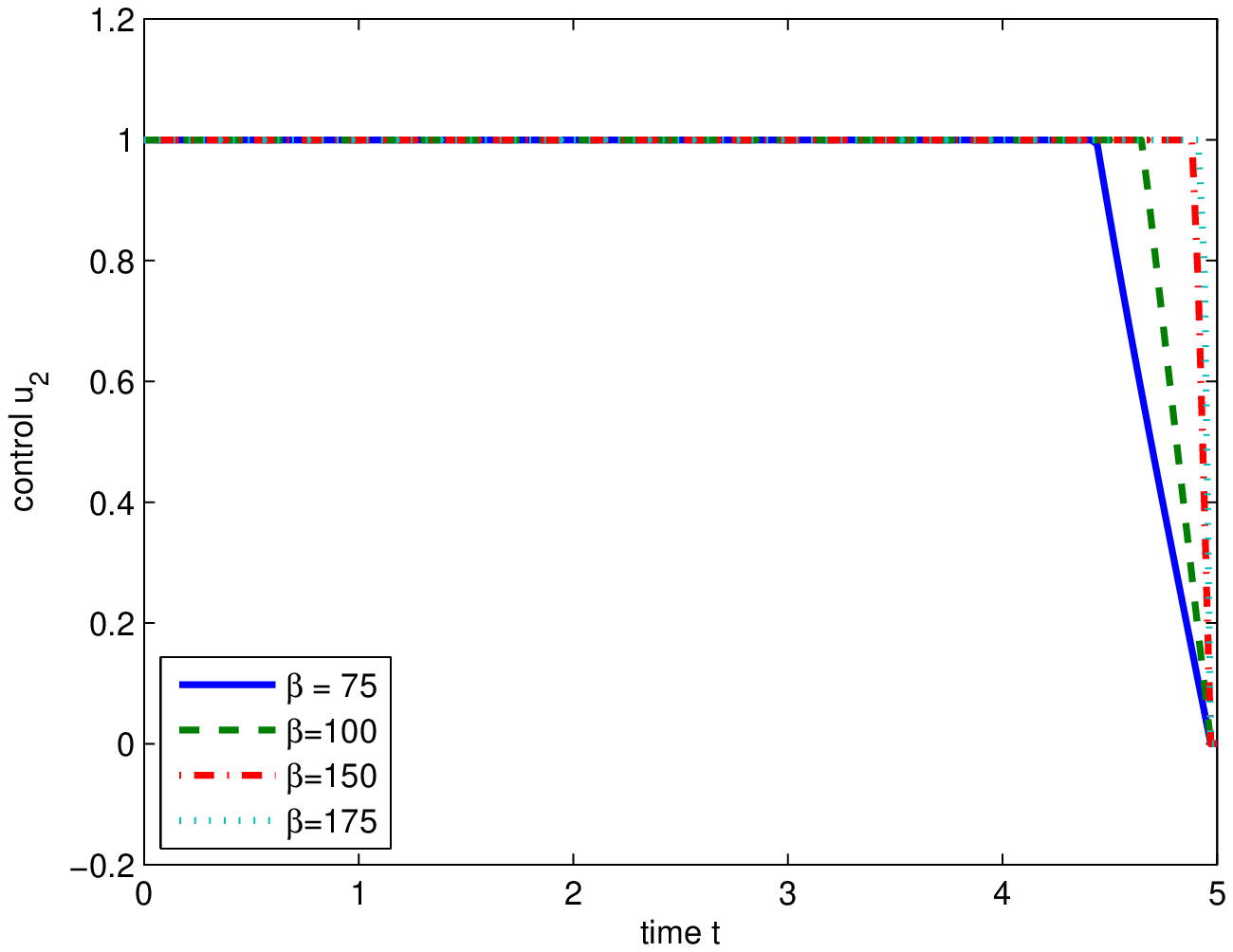}}
\subfloat[\footnotesize{$(I+L_2)/30000$}]{\label{fig:variarbeta:I:L2}
\includegraphics[width=0.32\textwidth]{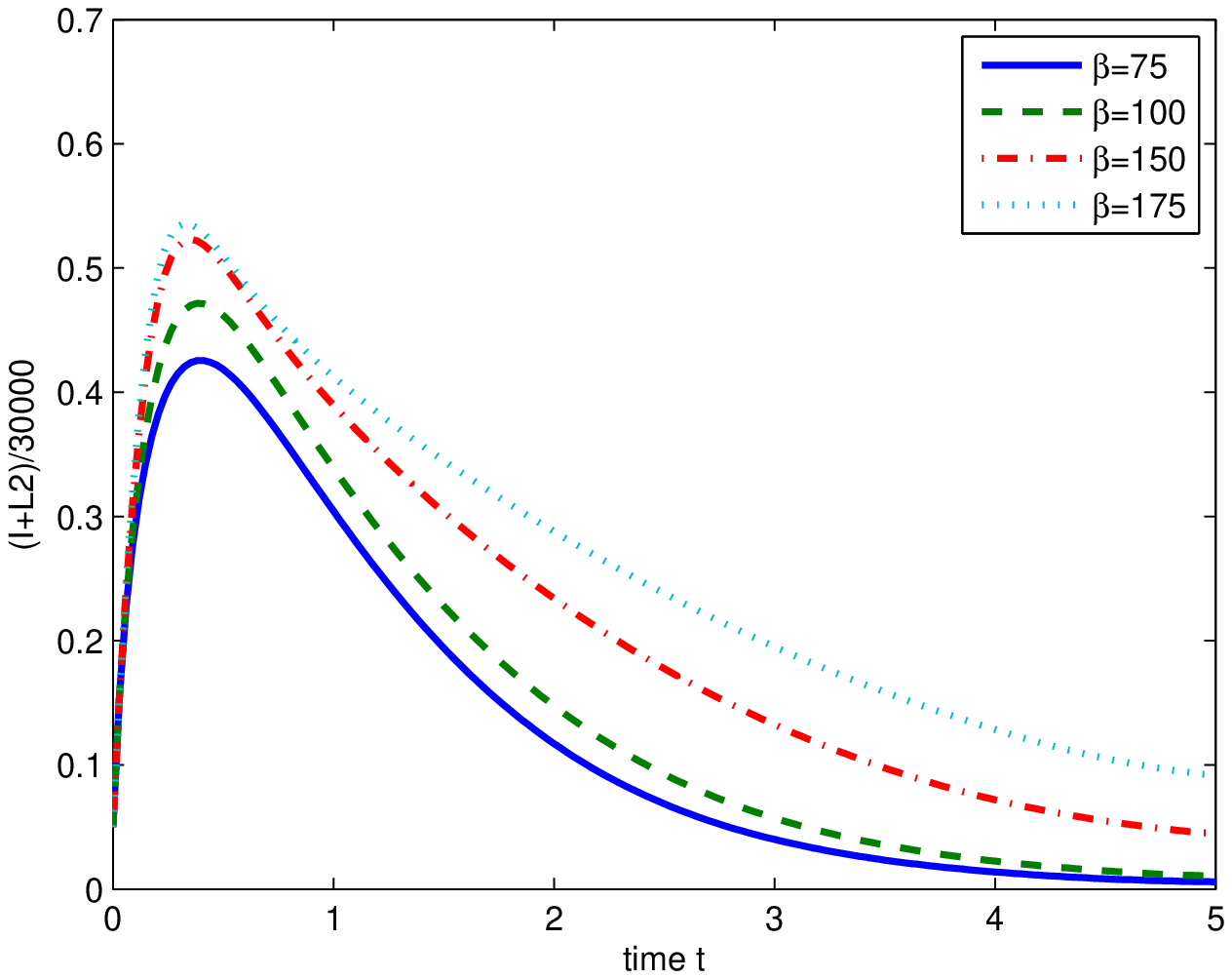}}
\caption{Variation of parameter $\beta$ ($\beta = 75, 100, 150, 175$).}
\label{fig:variar:beta}
\end{figure}

In Figure~\ref{fig:variar:N} the size of the population takes three different values,
$30000$, $40000$ and $60000$, and the controls $u_1(t)$ and $u_2(t)$ are plotted
for $t \in [0, 5]$. One can conclude that the optimal strategies do not vary significantly:
the control $u_1$ is at the upper bound during 2.5 years for $N=40000$
and during 2.8 years for $N=60000$, and $u_2$ is at the upper bound during 4.7 years
for $N=40000$ and during 4.8 years for $N=60000$. On the other hand,
the controls $\left(u_1(\cdot), u_2(\cdot)\right)$ are such that the fraction
of infectious and persistent latent individuals $\left(I + L_2\right)/N$
does not depend on the size of the population.

\begin{figure}[!htb]
\centering
\subfloat[\footnotesize{Control $u_1$}]{\label{fig:variarN:u1}
\includegraphics[width=0.32\textwidth]{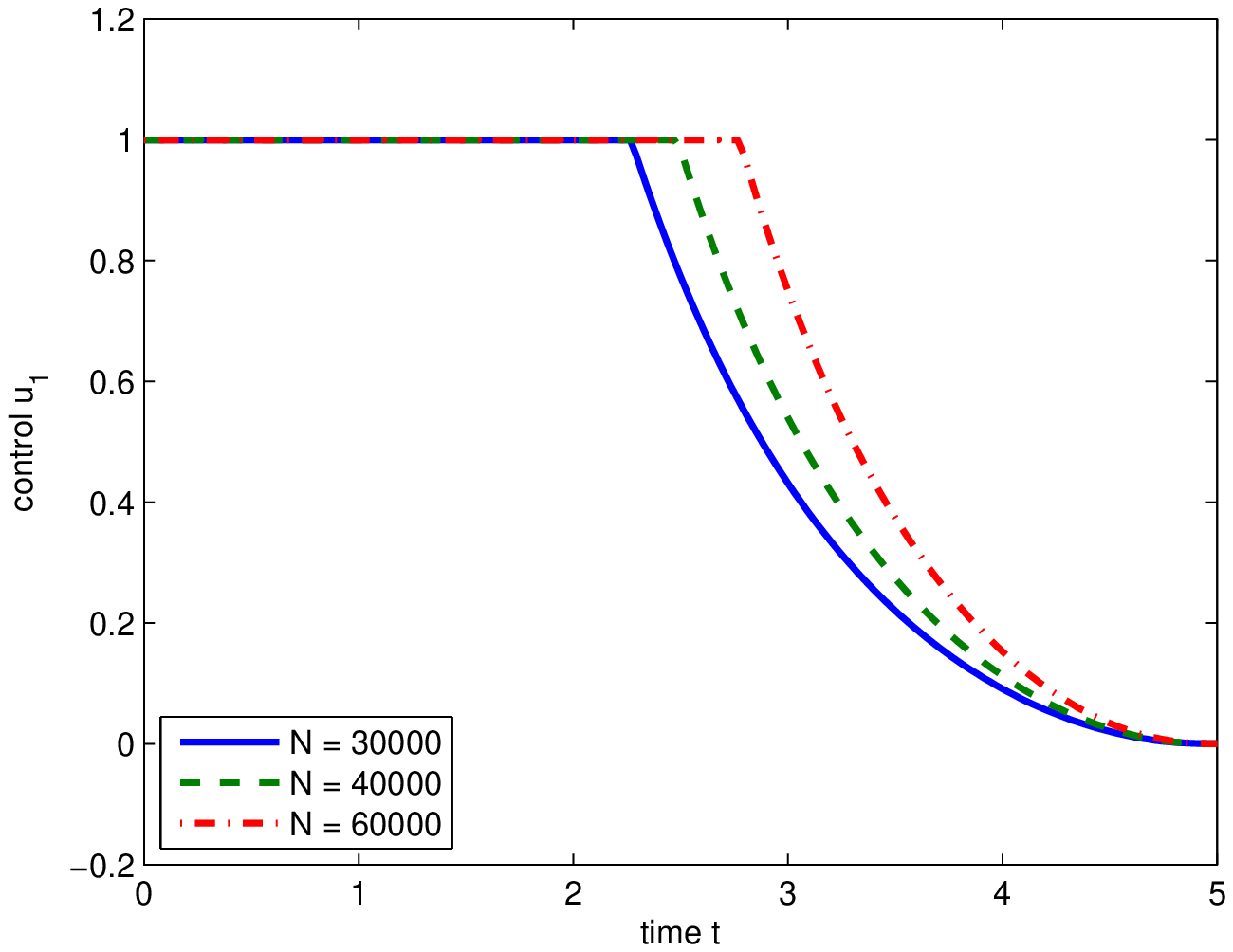}}
\subfloat[\footnotesize{Control $u_2$}]{\label{fig:variarN:u2}
\includegraphics[width=0.32\textwidth]{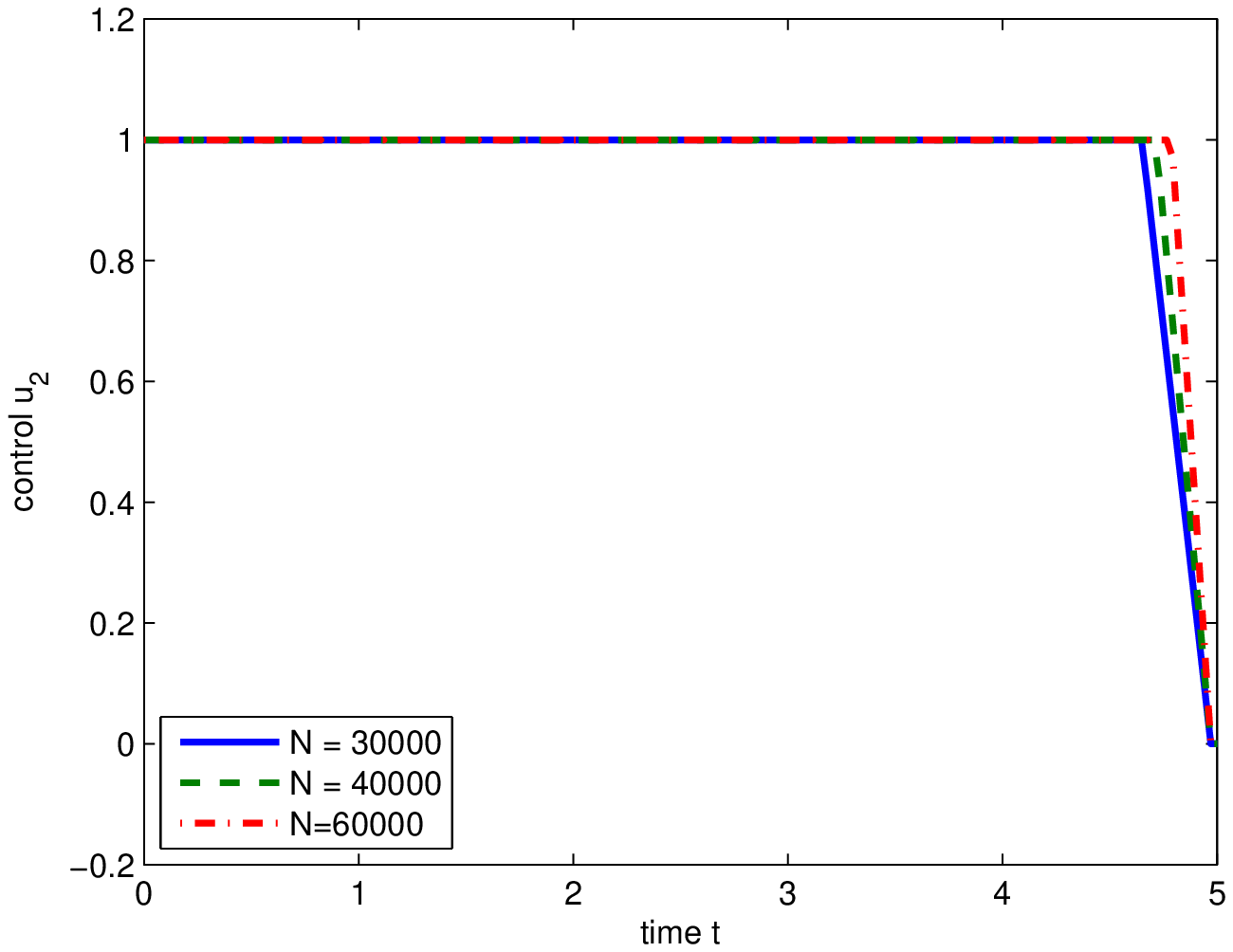}}
\subfloat[\footnotesize{$(I+L_2)/N$}]{\label{fig:variarN:I:L2}
\includegraphics[width=0.32\textwidth]{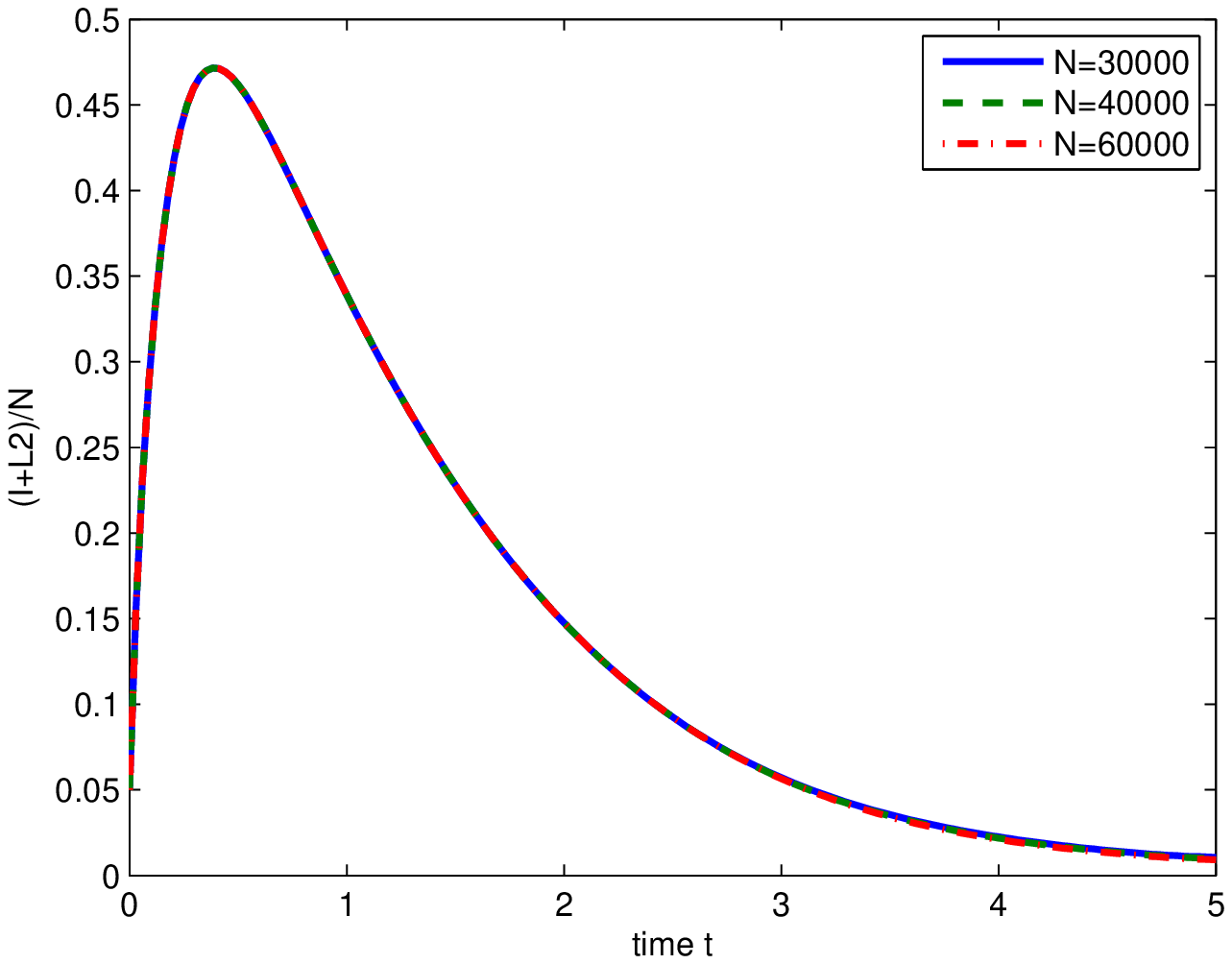}}
\caption{Different sizes $N$ of population ($N = 30000$, $40000$, $60000$)
with $\beta = 100$, $W_1 = 500$, $W_2 = 50$, $\epsilon_1 = \epsilon_2= 0.5$,
and the values of the parameters of Table~\ref{parameters}.}
\label{fig:variar:N}
\end{figure}

In Figures~\ref{fig:variar:W2}--\ref{fig:W1igualW2} we present
the effect of the weight constants $W_i$, $i=1, 2$, on the controls,
where $\beta = 100$, $N= 30000$, and $\epsilon_1 = \epsilon_2 = 0.5$
(and the other parameters are given by Table~\ref{parameters}).
We assume that the weight constant $W_1$ associated with the control $u_1$
is greater or equal than the weight constant $W_2$ associated
with the control $u_2$, because the cost associated to $u_1$ includes
the cost of holding active infected patients $I$ in the hospital
or paying people to supervise the patients, assuring that they finish
their treatment, and the cost associated to $u_2$ is related to the fraction
of persistent latent individuals $L_2$ that is put under treatment.
It is clear that when $W_1$ is fixed and $W_2$ increases,
the amount of $u_2$ decreases and $u_1$ remains the same
(Figure~\ref{fig:variar:W2}). Analogously, when $W_2$ is fixed and $W_1$ decreases,
the amount of $u_1$ increases (Figure~\ref{fig:variar:W1}). When the weight constants
are equal (Figure~\ref{fig:W1igualW2}), both controls vary but, in all the three cases,
the optimal control strategies assure the same value for the fraction
of infectious and persistent latent individuals.
Figure~\ref{fig:variar:epsilon} illustrates the situation when we vary
the measures of control efficacy $\epsilon_i$, $i=1, 2$. We consider $\beta =100$,
$N=30000$, $W_1 = 500$, $W_2 = 50$, and $\epsilon_1 = \epsilon_2$ with
$\epsilon_i = 0.25; 0.5; 0.75$, $i=1, 2$. We observe that as the efficacy
of the controls increase, the control strategies contribute to the minimization
of the fraction of infectious and persistent latent individuals.

\begin{figure}[!htb]
\centering
\subfloat[\footnotesize{Control $u_1$}]{\label{fig:variarW2:u1}
\includegraphics[width=0.32\textwidth]{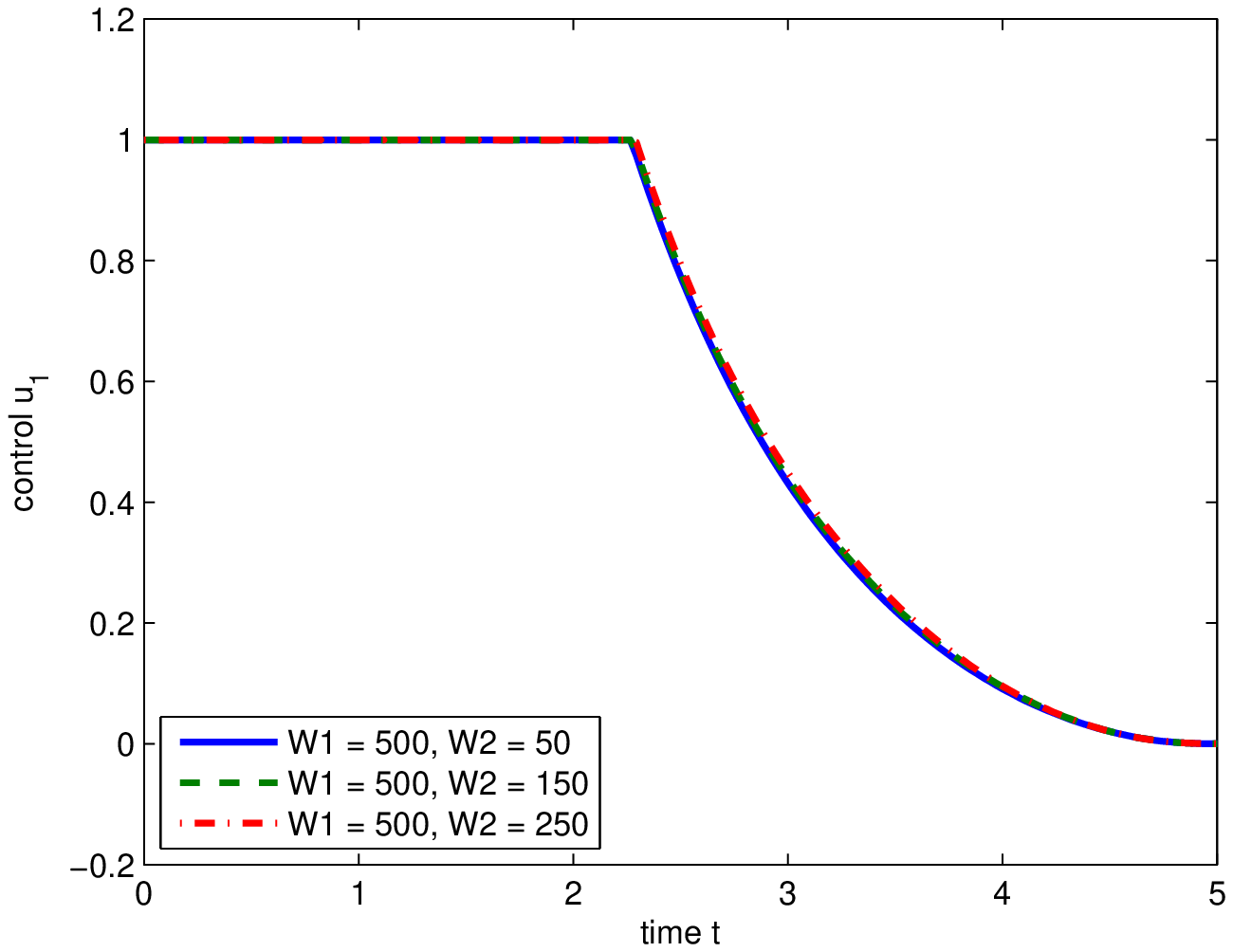}}
\subfloat[\footnotesize{Control $u_2$}]{\label{fig:variarW2:u2}
\includegraphics[width=0.32\textwidth]{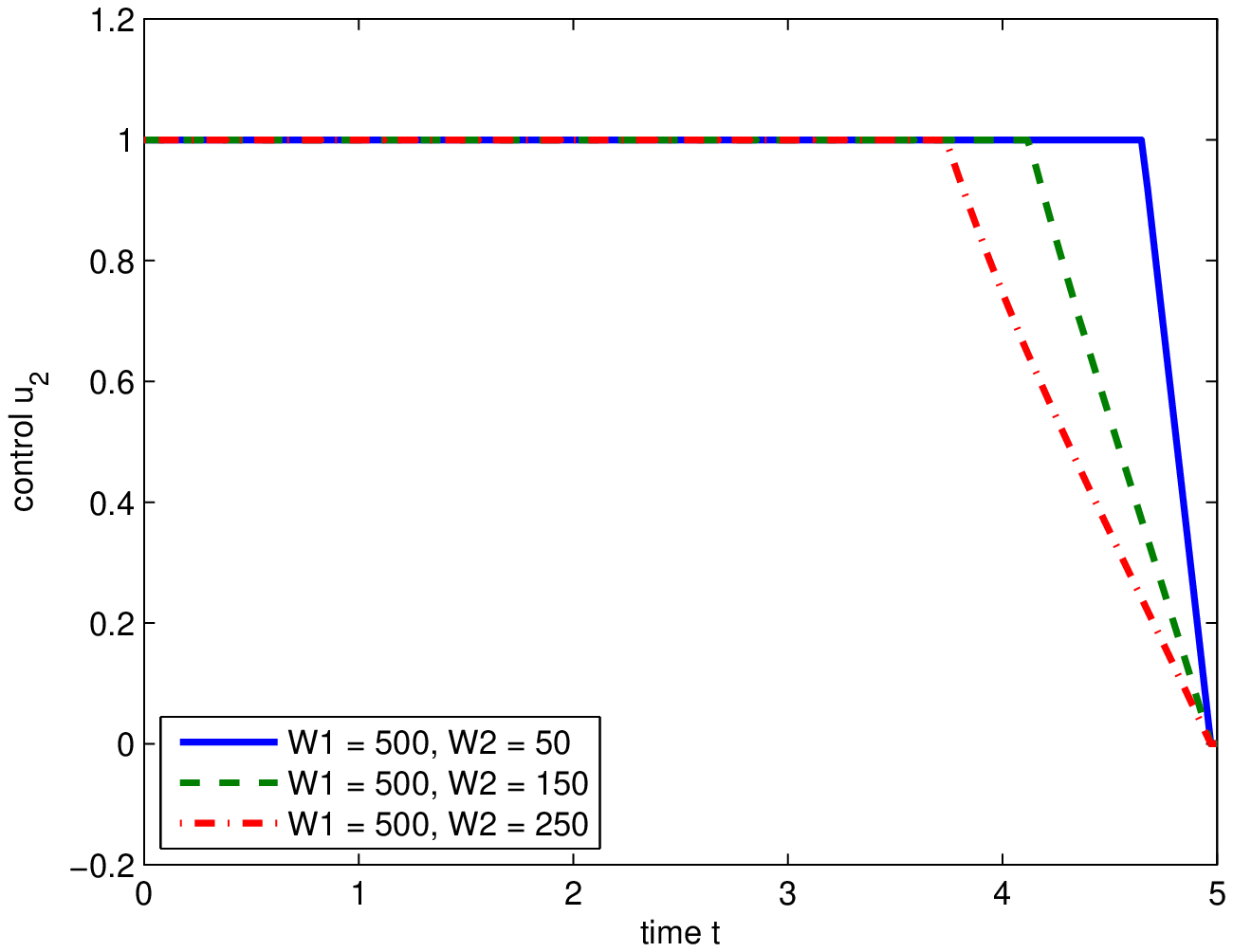}}
\subfloat[\footnotesize{$(I+L_2)/N$}]{\label{fig:variarW2:I:L2}
\includegraphics[width=0.32\textwidth]{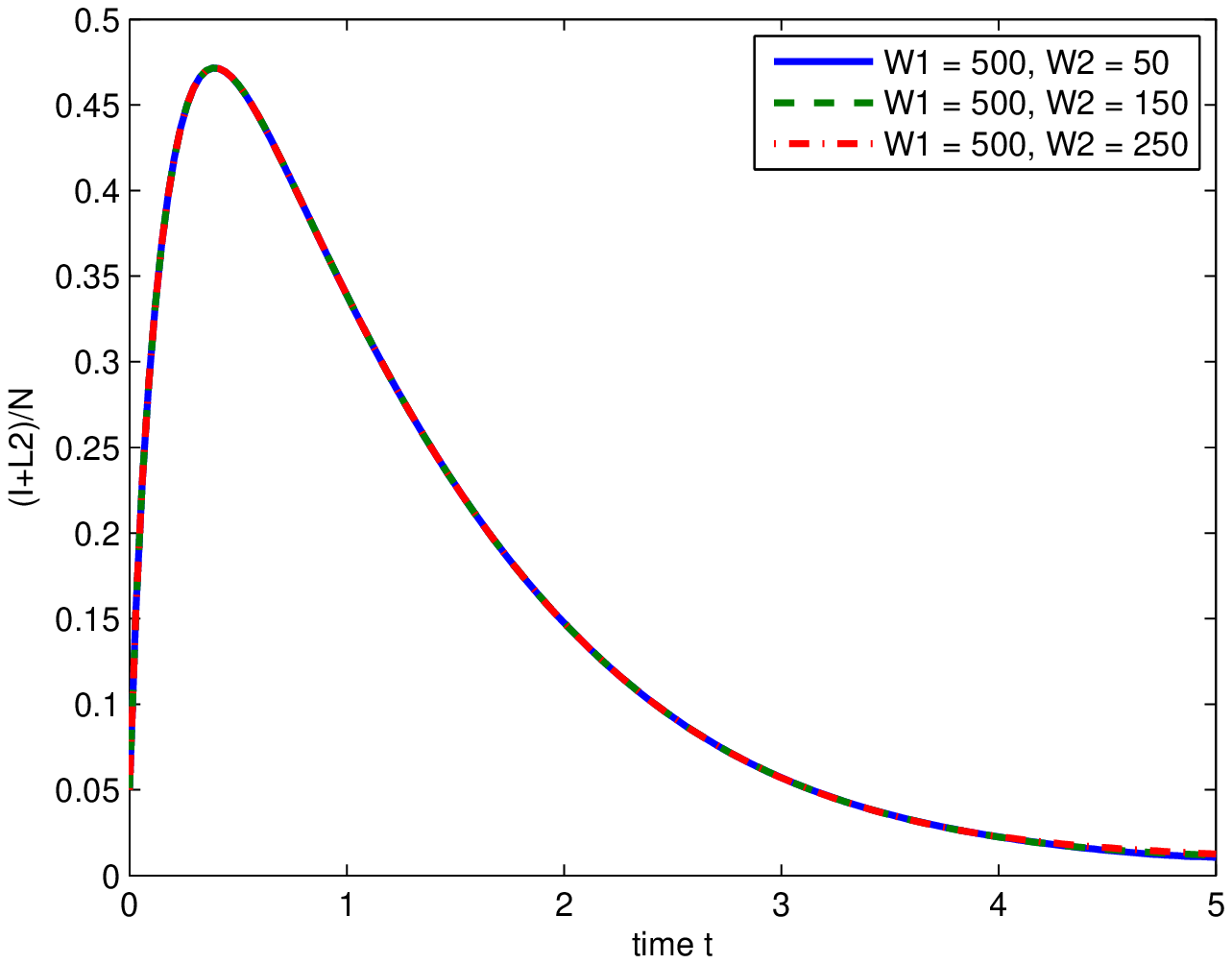}}
\caption{Variation on the control weights: fix $W_1 = 500$ and vary $W_2$ ($W_2 = 50, 150, 250$).}
\label{fig:variar:W2}
\end{figure}

\begin{figure}[!htb]
\centering
\subfloat[\footnotesize{Control $u_1$}]{\label{fig:variarW1:u1}
\includegraphics[width=0.32\textwidth]{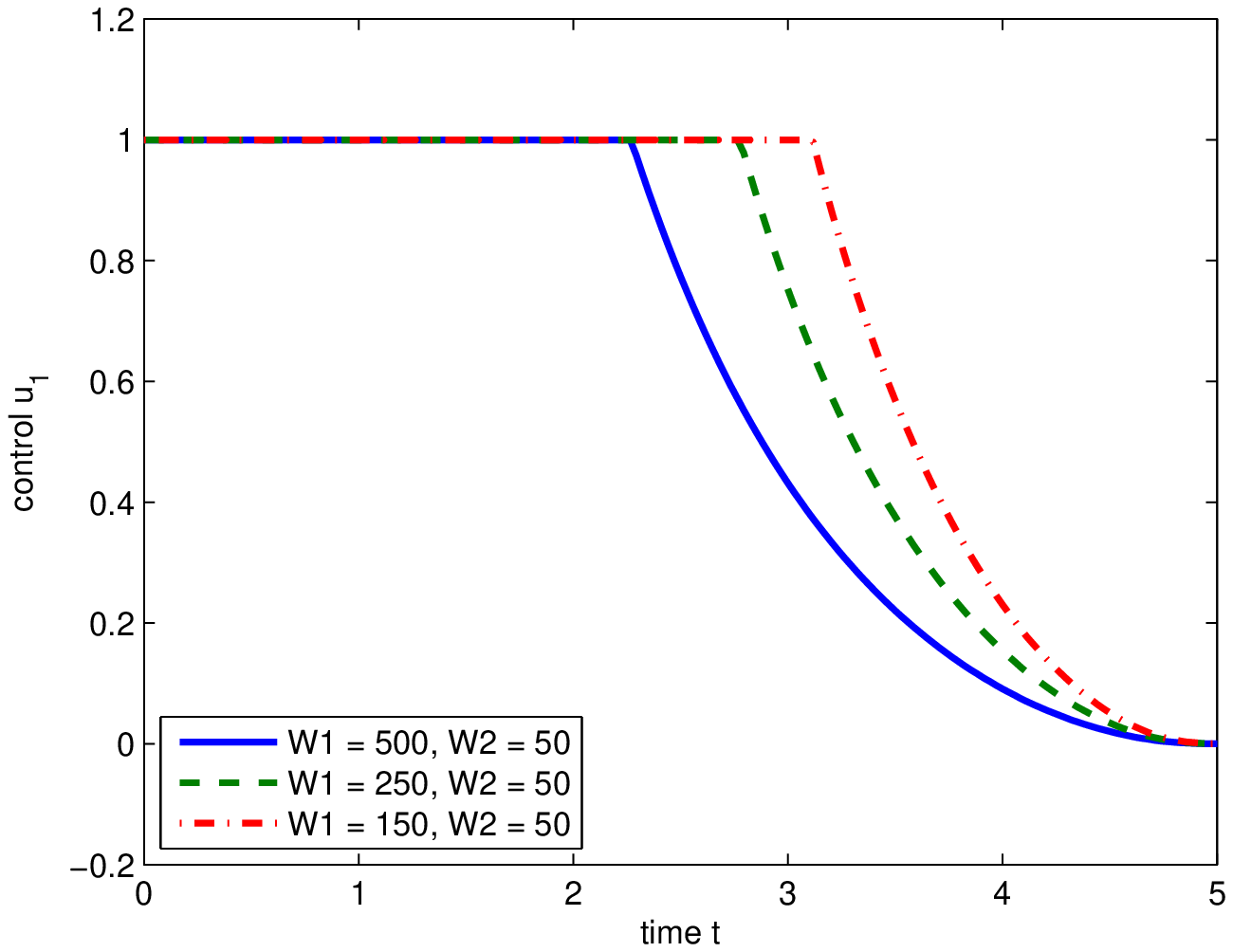}}
\subfloat[\footnotesize{Control $u_2$}]{\label{fig:variarW1:u2}
\includegraphics[width=0.32\textwidth]{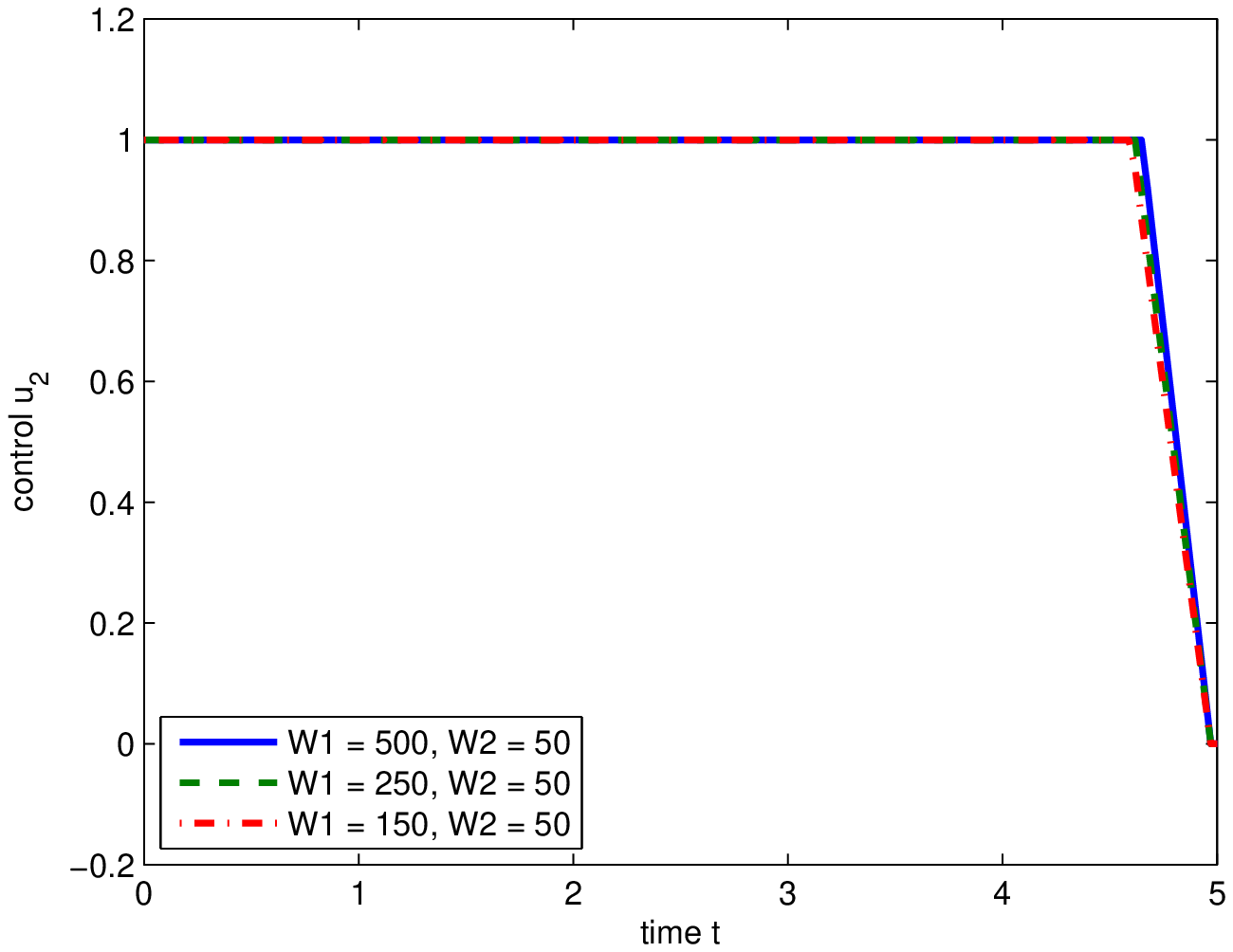}}
\subfloat[\footnotesize{$(I+L_2)/N$}]{\label{fig:variarW1:I:L2}
\includegraphics[width=0.32\textwidth]{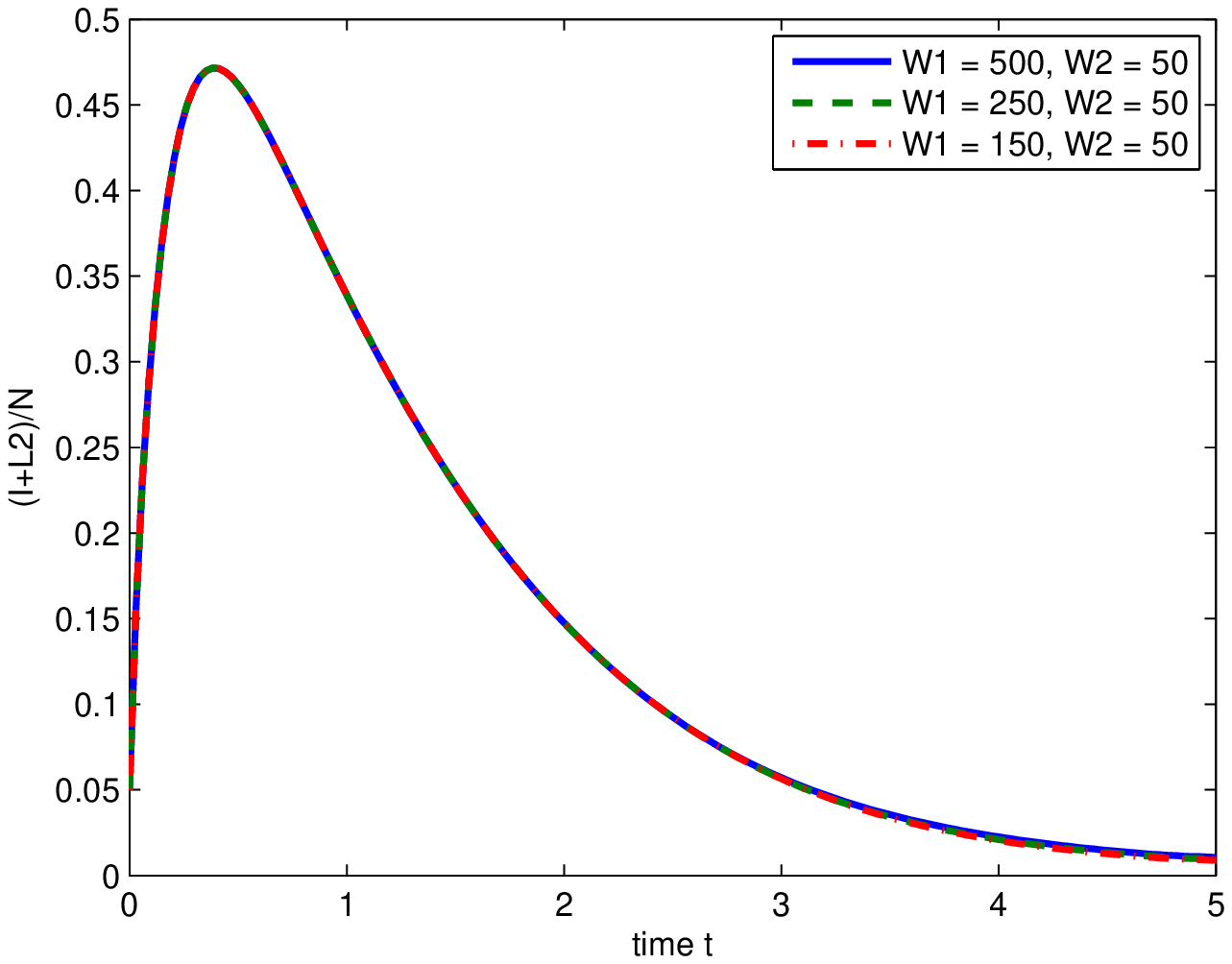}}
\caption{Fix $W_2 = 50$ and decrease $W_1$ ($W_1 = 500, 250, 150$).}
\label{fig:variar:W1}
\end{figure}

\begin{figure}[!htb]
\centering
\subfloat[\footnotesize{Control $u_1$}]{\label{fig:W1igualW2:u1}
\includegraphics[width=0.32\textwidth]{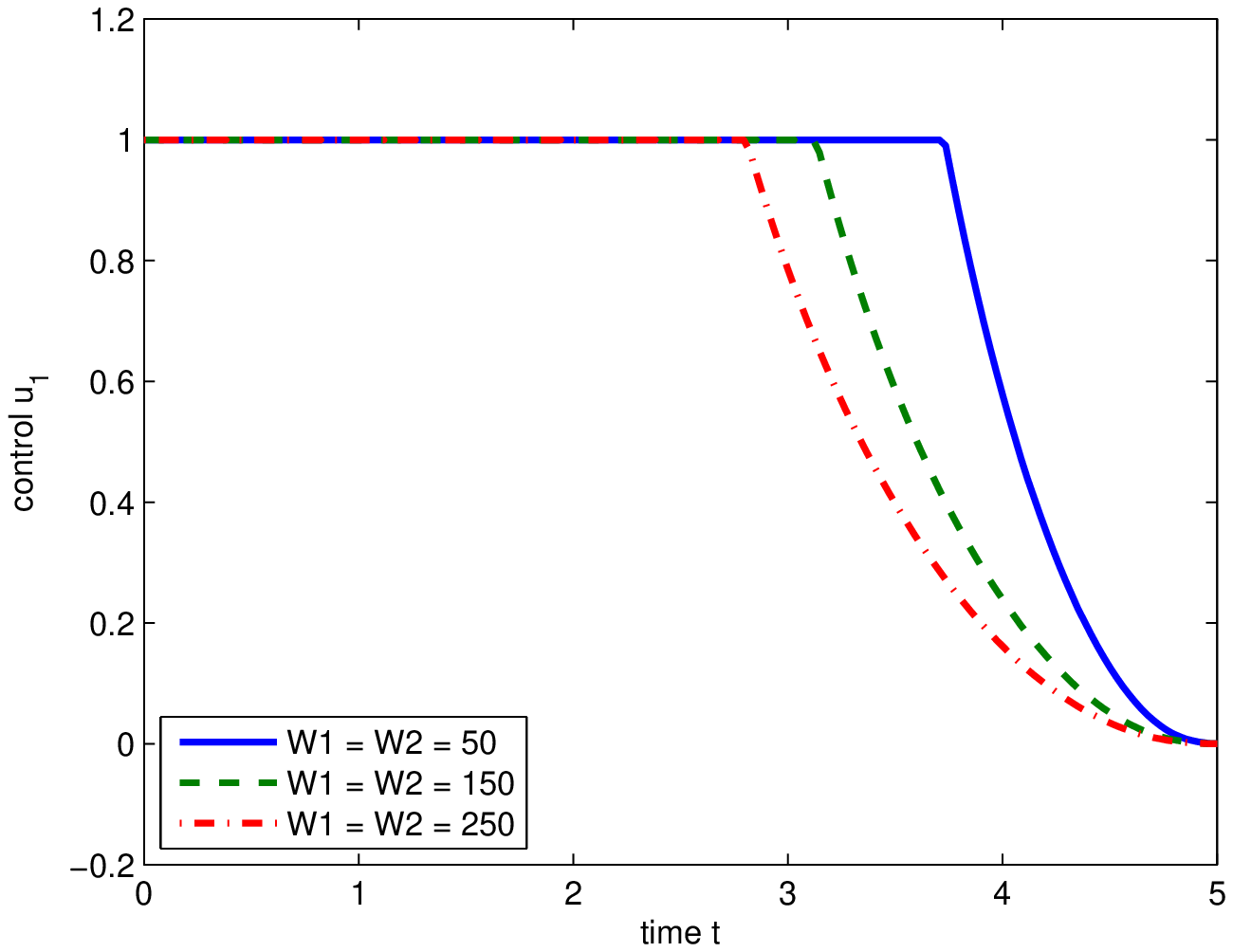}}
\subfloat[\footnotesize{Control $u_2$}]{\label{fig:W1igualW2:u2}
\includegraphics[width=0.32\textwidth]{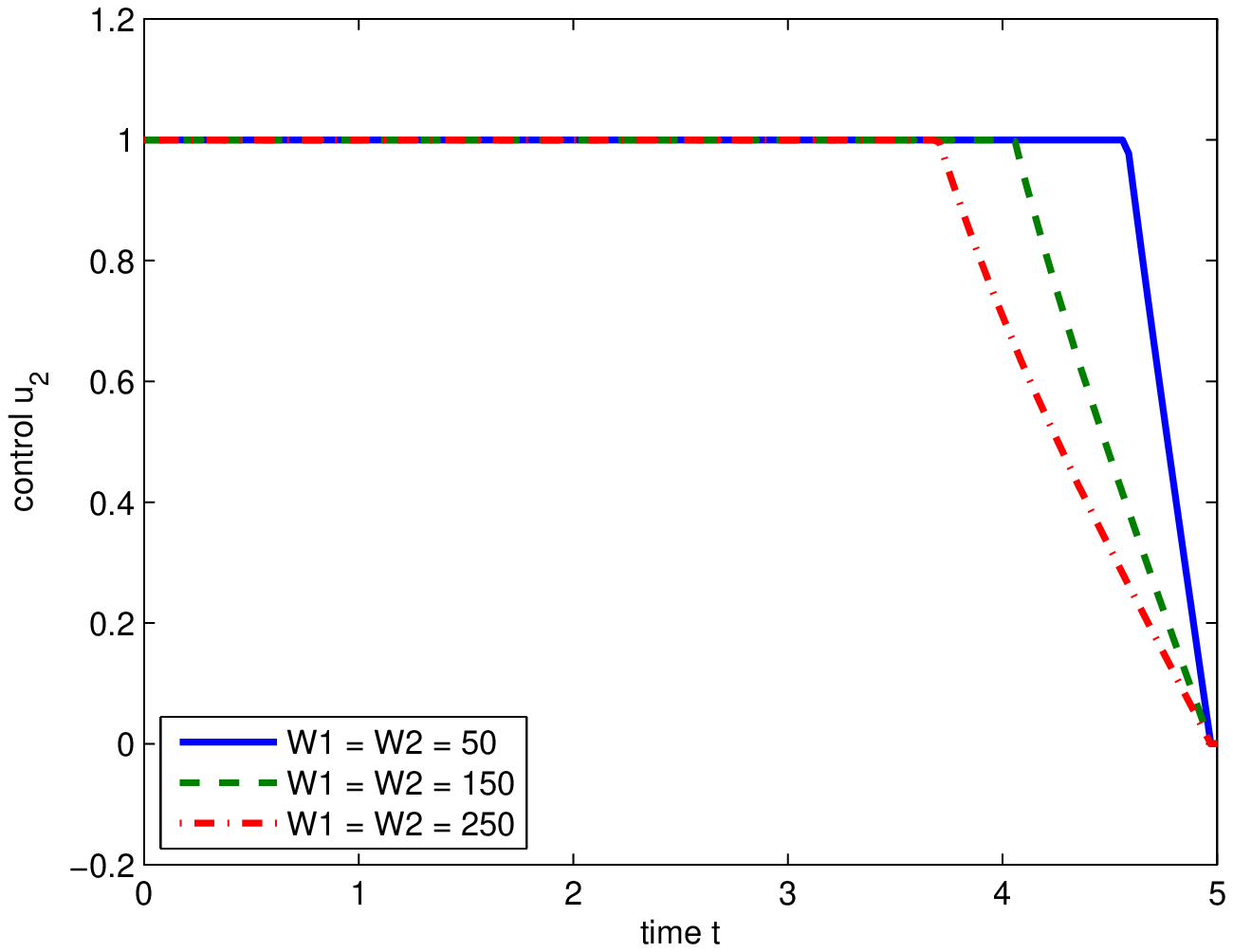}}
\subfloat[\footnotesize{$(I+L_2)/N$}]{\label{fig:W1igualW2:I:L2}
\includegraphics[width=0.32\textwidth]{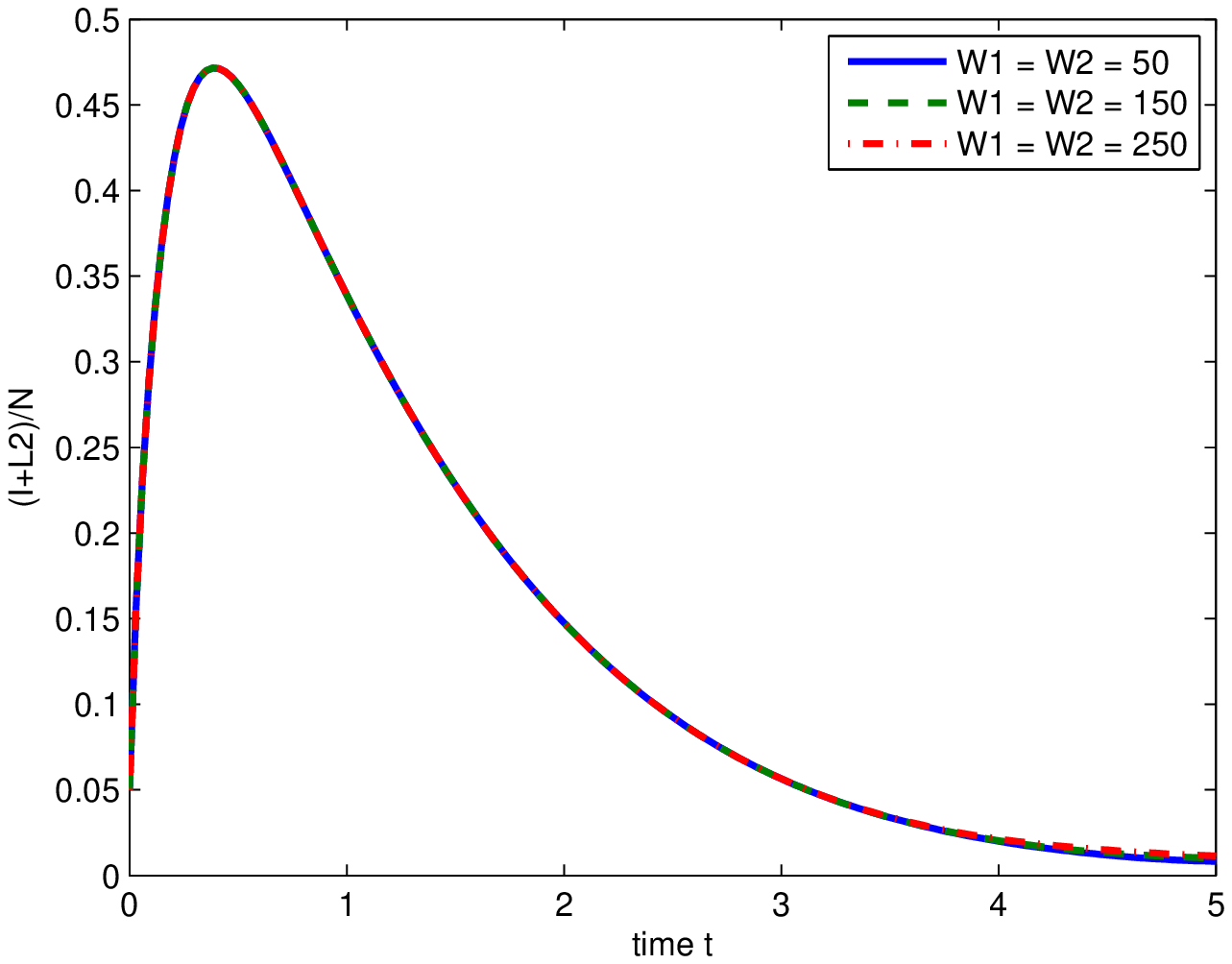}}
\caption{$W_1 = W_2= 50, 150, 250$.}
\label{fig:W1igualW2}
\end{figure}

\begin{figure}[!htb]
\centering
\subfloat[\footnotesize{Control $u_1$}]{\label{fig:variar:epsilon:u1}
\includegraphics[width=0.32\textwidth]{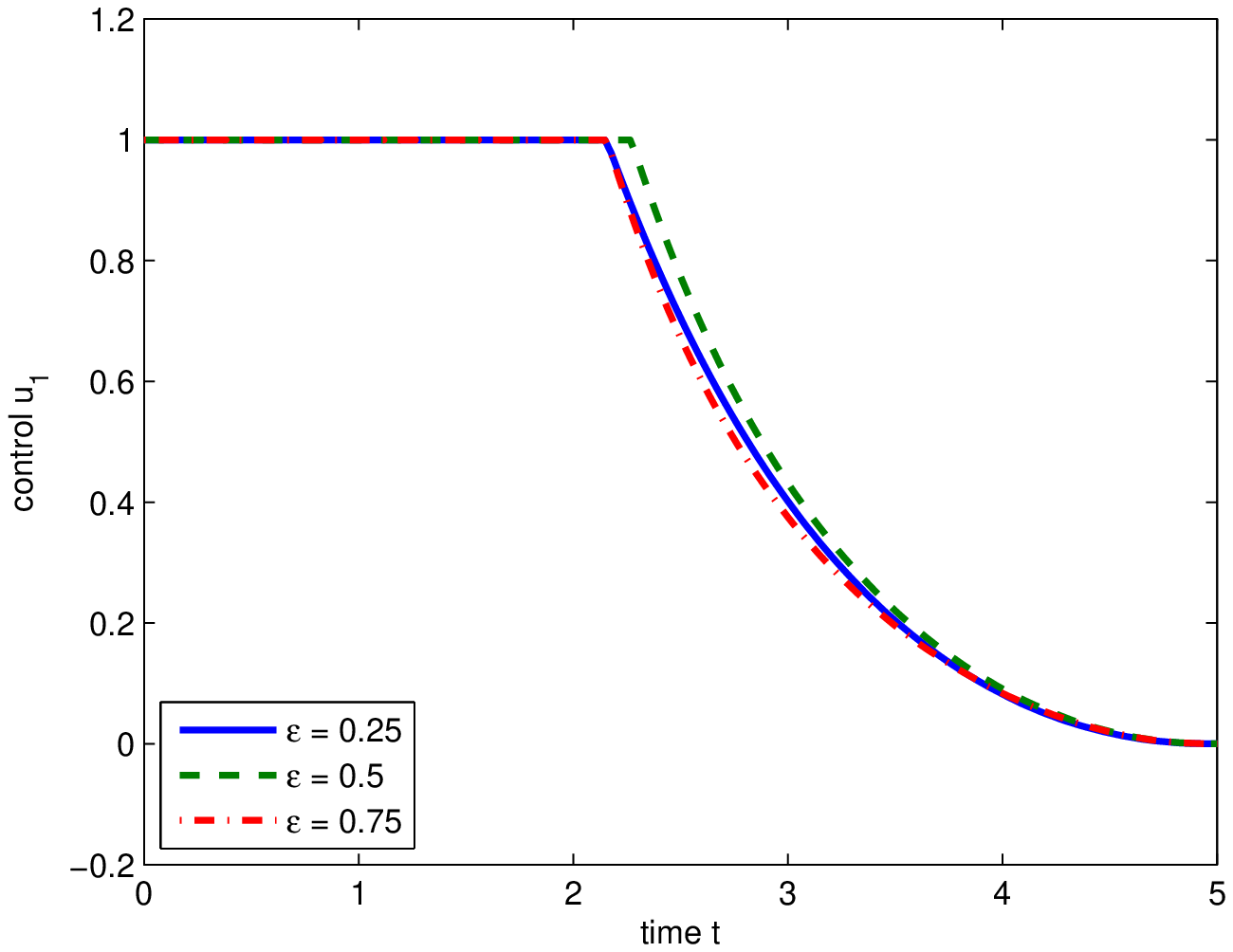}}
\subfloat[\footnotesize{Control $u_2$}]{\label{fig:variar:epsilon:u2}
\includegraphics[width=0.32\textwidth]{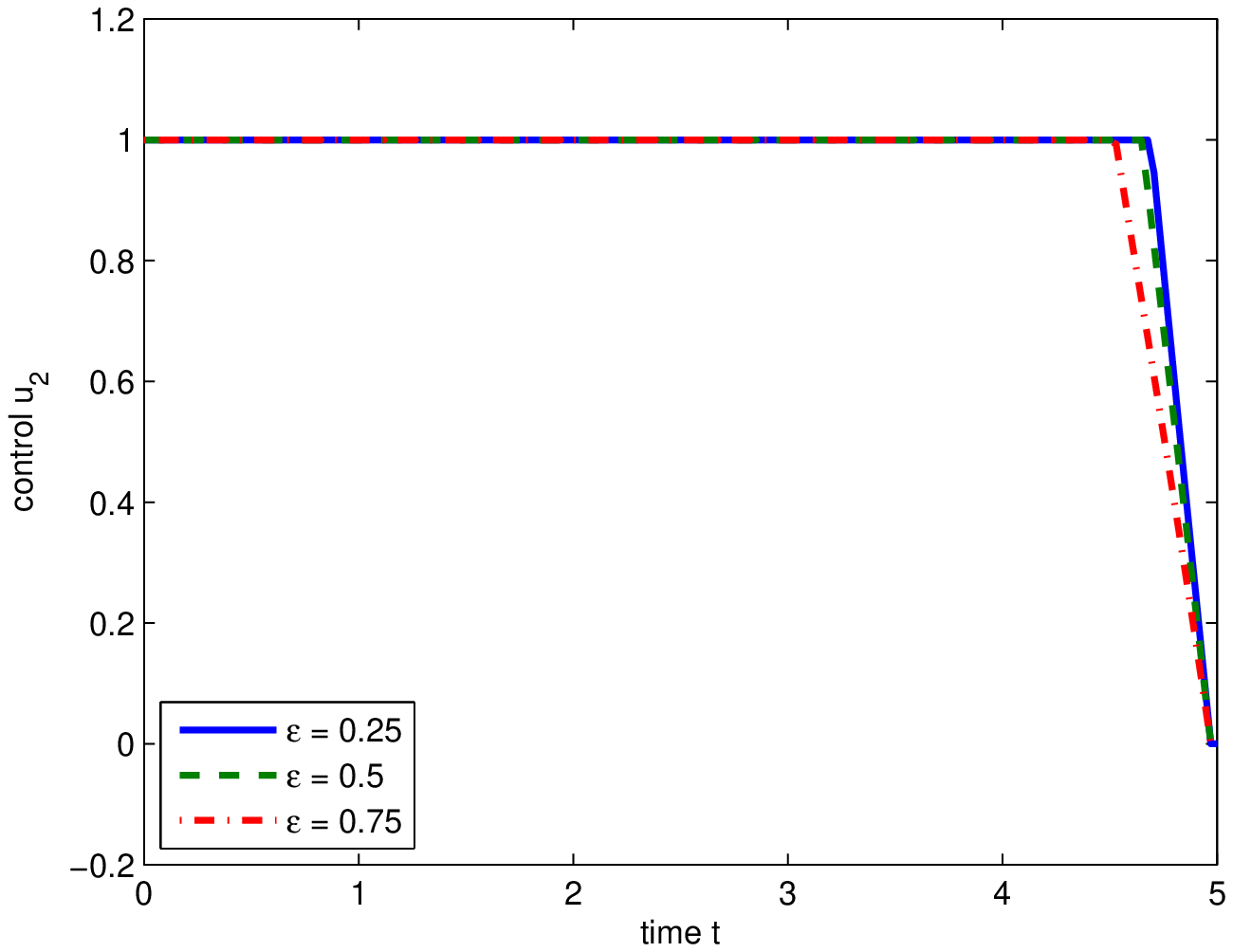}}
\subfloat[\footnotesize{$(I+L_2)/N$}]{\label{fig:variar:epsilon:I:L2}
\includegraphics[width=0.32\textwidth]{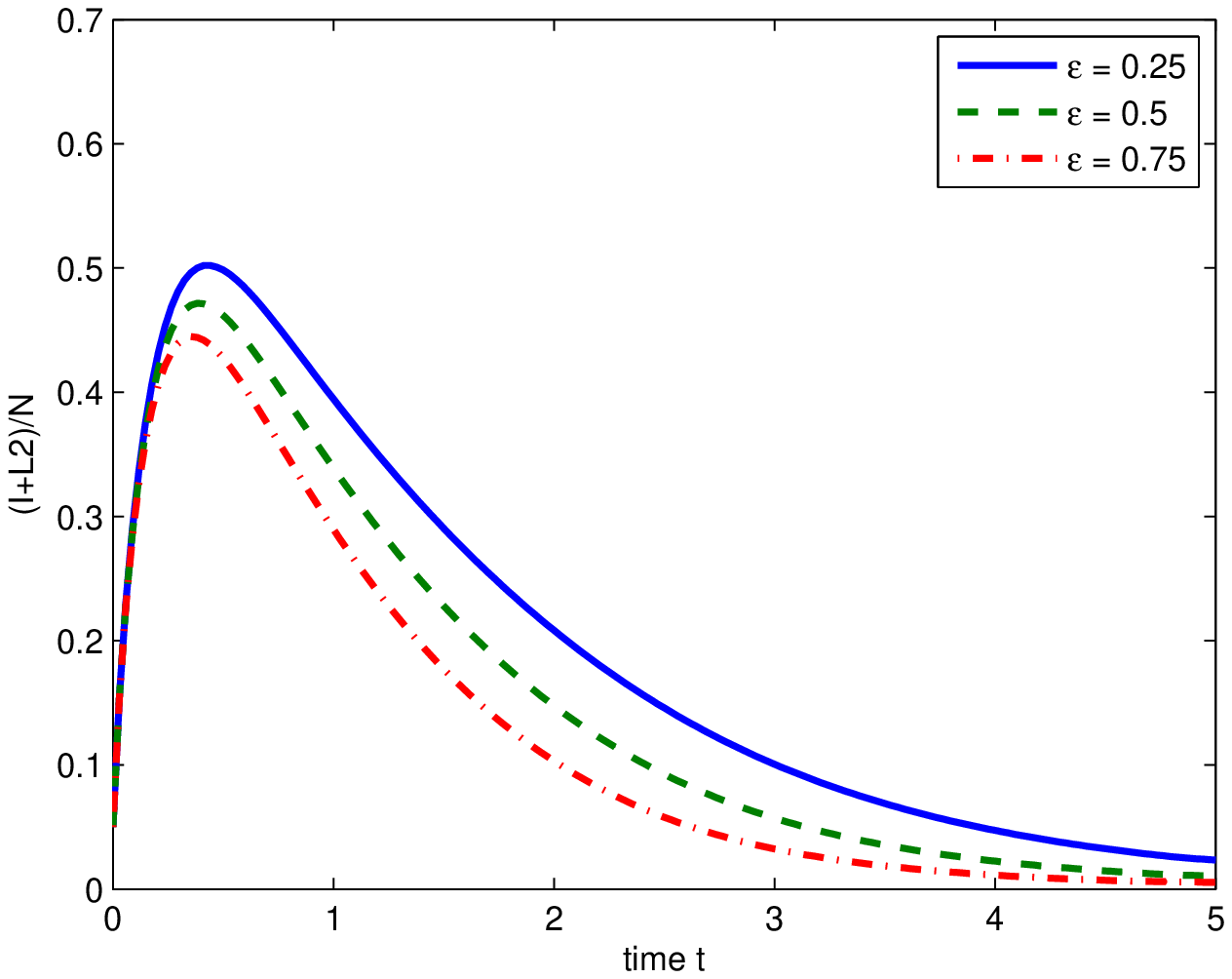}}
\caption{Variation on $\epsilon_i$, $i=1, 2$ ($\epsilon_1 = \epsilon_2 = 0.25; 0.5; 0.75$).}
\label{fig:variar:epsilon}
\end{figure}

If we increase the transmission coefficient $\beta$, a different scenario occurs.
TB rises and then declines in all of the simulations presented so far.
Because TB is clearly already endemic in most relevant settings,
we are not before the peak in Figure~\ref{fig:I:L2:com:e:sem:controlos}
and Figures~\ref{fig:variar:beta}--\ref{fig:variar:epsilon} (c).
But if we are now at the point where we are after
that peak, then TB is declining anyway in all of the figures.
However, if we take, for example, $\beta = 200$, this situation does not occur.
If we don't introduce controls, the number of active infected individuals $I$
and persistent latent individuals $L_2$ increases significantly, see Figure~\ref{fig:I:L2:semcont:b200}.
For these parameter values we have $R_0(0, 0) = 4.4$.

\begin{figure}[!htb]
\centering
\includegraphics[width=0.5\textwidth]{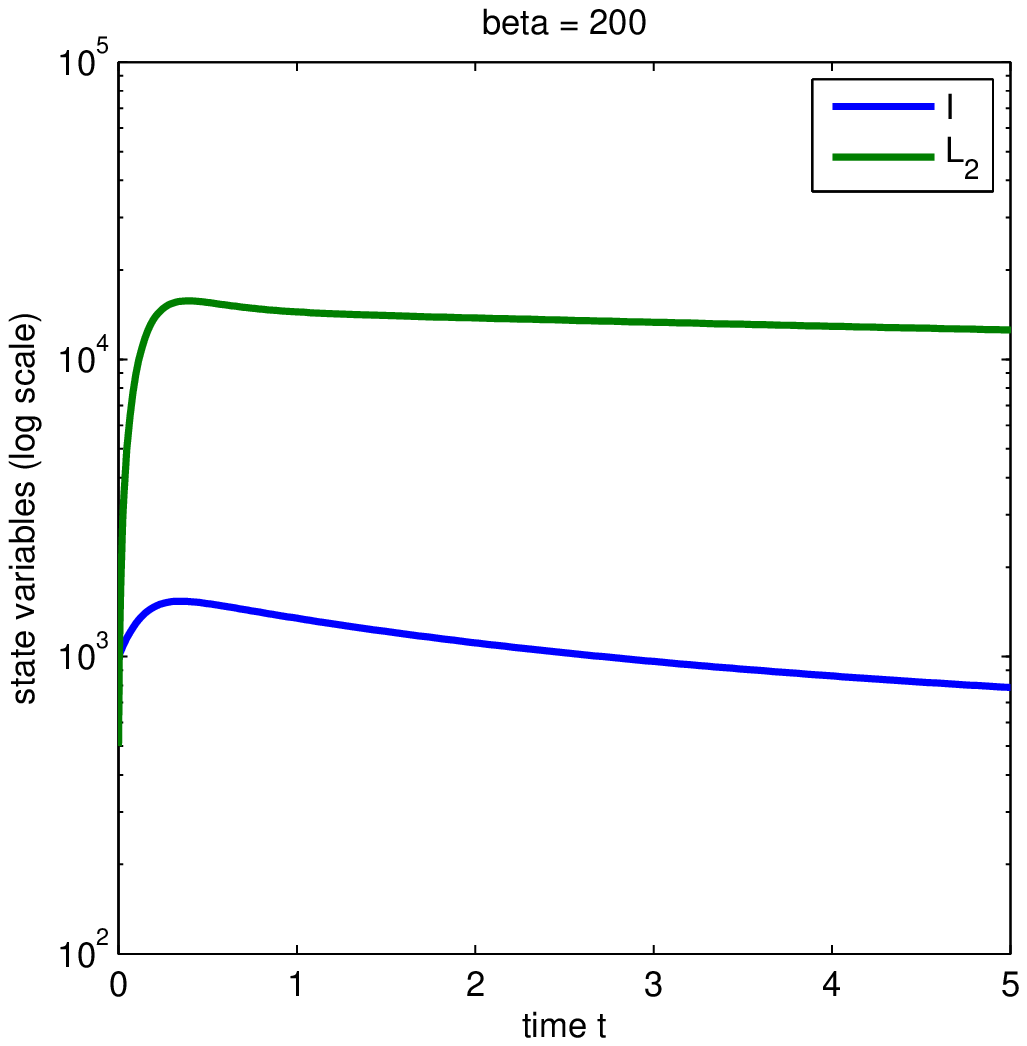}
\caption{Number of active infected individuals $I$ and persistent latent individuals $L_2$,
$\beta = 200$ and $N=30000$, when no controls are used.}
\label{fig:I:L2:semcont:b200}
\end{figure}

We now show a new situation where the controls have a crucial role:
without controls the number of infected increases, while
using the optimal control approach the number of infected decreases
and remains always in a lower level. Let $N = 30000$,
$\epsilon_1 = \epsilon_2 = 0.5$, and $W_1 = W_2 = 500$.
Take, for example, $\beta = 250$. If the control measures $u_1$ and $u_2$
are not implemented, then the number of active infected individuals $I$ increases
in all the treatment period. However, if the control measures $u_1$ and $u_2$ are used
(see Figure~\ref{fig:u_1:u_2:b250} for the optimal controls) an important
decrease of active infected individuals is observed
(see Figure~\ref{fig:I:com:e:sem:cont:b250}).
Without controls the basic reproduction number is $R_0(0, 0)=5.5$
and with controls $R_0(1, 1) = 4.4$.

\begin{figure}[!htb]
\centering
\subfloat[\footnotesize{Control $u_1$}]{\label{u_1:b250}
\includegraphics[width=0.45\textwidth]{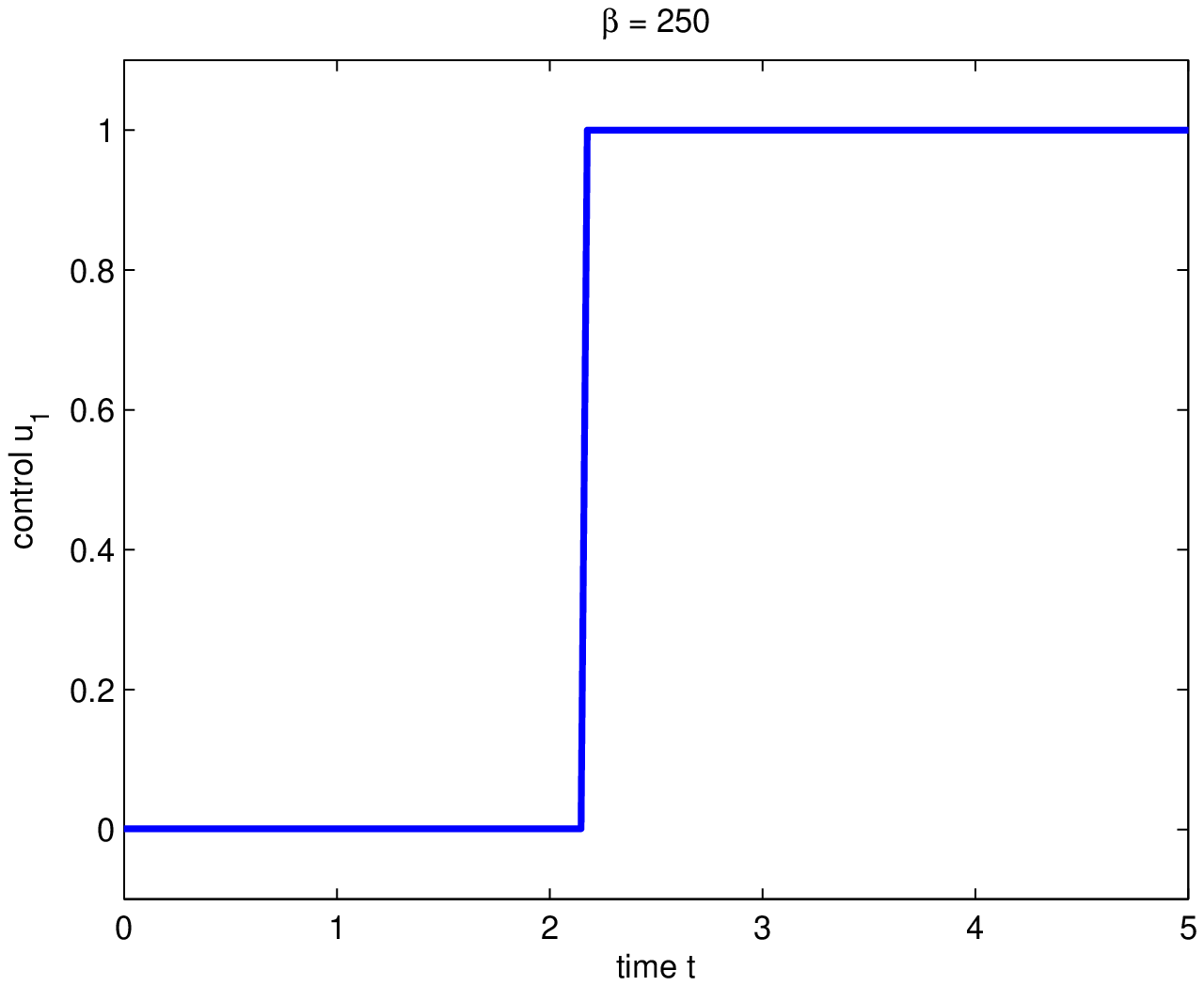}}
\subfloat[\footnotesize{Control $u_2$}]{\label{u_2:b250}
\includegraphics[width=0.45\textwidth]{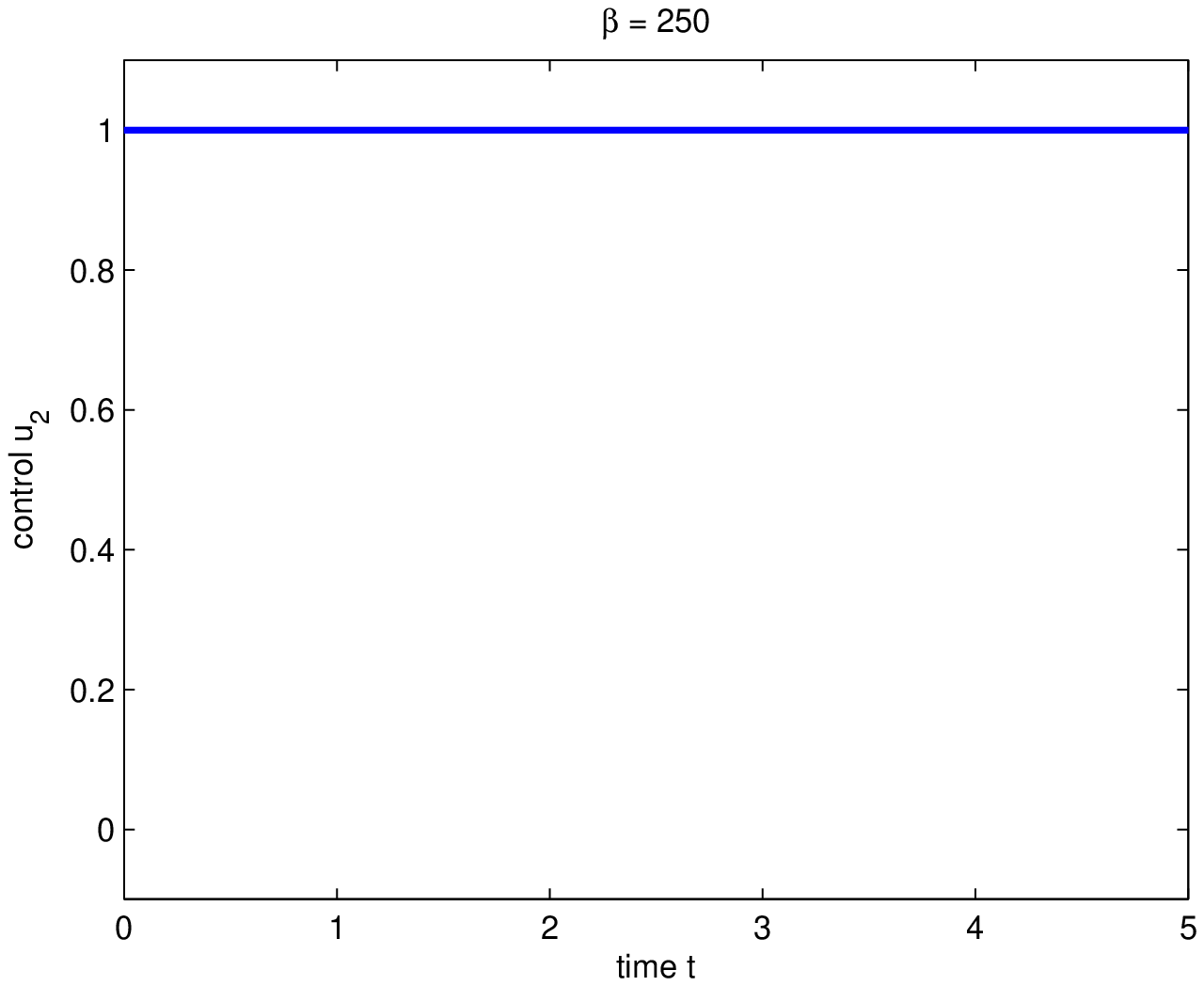}}
\caption{Controls $u_1$ and $u_2$ for $\beta = 250$;
$\epsilon_1 = \epsilon_2 = 0.5$; $W_1 = W_2 = 500$; $N=30000$.}
\label{fig:u_1:u_2:b250}
\end{figure}

\begin{figure}[!htb]
\centering
\includegraphics[width=0.5\textwidth]{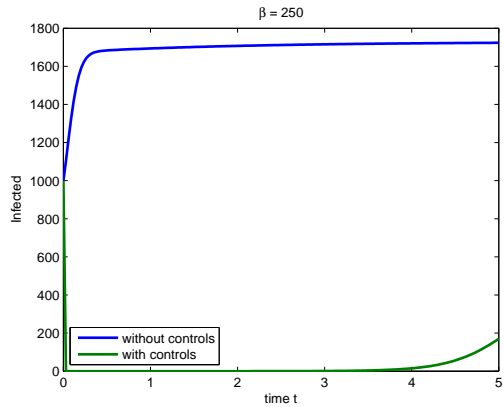}
\caption{Number of active infected individuals with and without controls.}
\label{fig:I:com:e:sem:cont:b250}
\end{figure}

Other cost functionals could be considered in the optimal control problem,
namely a cost $C$ where the category of persistent latent
individuals $L_2$ is not considered in \eqref{costfunction}, that is,
\begin{equation}
\label{eq:np:co}
C(u_1(\cdot), u_2(\cdot))
= \int_0^{T} \left[ I(t) + \frac{W_1}{2}u_1^2(t) + \frac{W_2}{2}u_2^2(t) \right] dt.
\end{equation}
However, in Figures~\ref{fig:I:L2:costs} and \ref{fig:I:L2:costs:b175}
we observe, for different values of $\beta$, that when we consider the cost
functional \eqref{costfunction}, the fraction of active infectious individuals
is lower compared with the case when we consider
only active infectious TB individuals $I$ \eqref{eq:np:co}.
\begin{figure}[!htb]
\centering
\subfloat[\footnotesize{Fraction $I/N$ of infected individuals}]{\label{I:compare:Cost}
\includegraphics[width=0.45\textwidth]{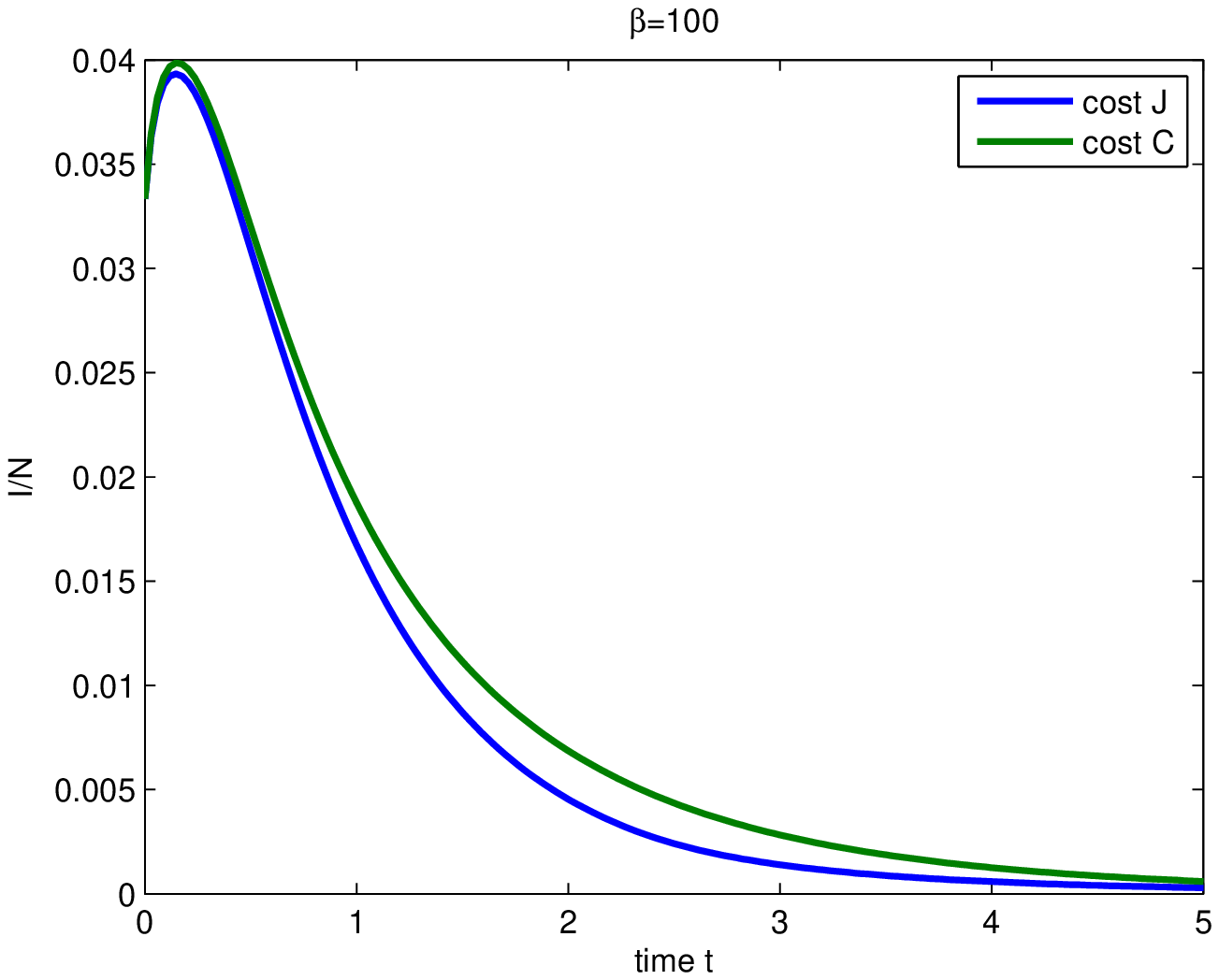}}
\subfloat[\footnotesize{Fraction $L_2/N$ of persistent latent individuals}]{\label{L2:compare:Cost}
\includegraphics[width=0.45\textwidth]{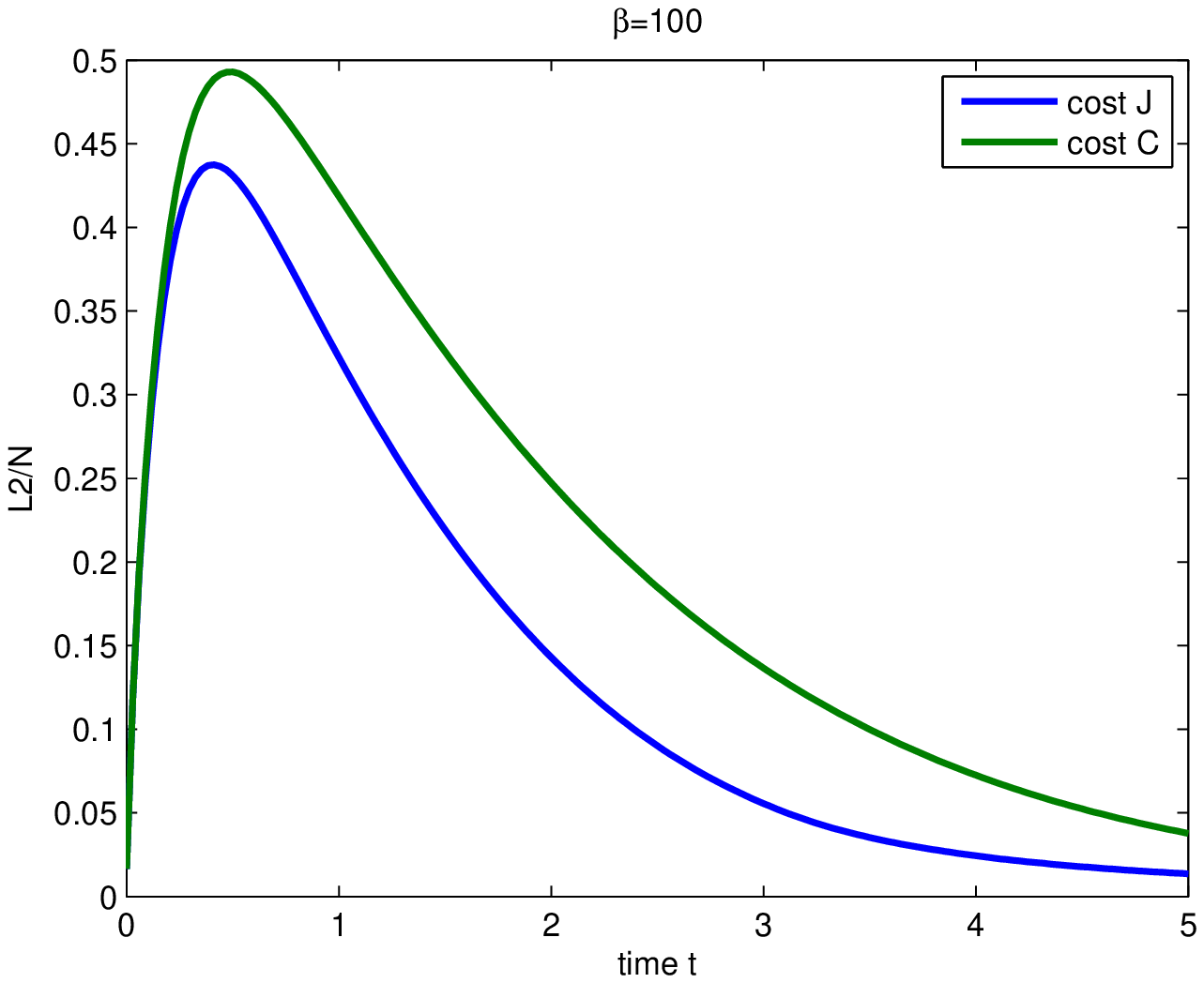}}
\caption{Comparison of active infectious $I$ and persistent latent $L_2$ individuals
for different cost functionals $J$ and $C$ ($\beta = 100$).}
\label{fig:I:L2:costs}
\end{figure}
\begin{figure}[!htb]
\centering
\subfloat[\footnotesize{Fraction $I/N$ of infected individuals}]{\label{I:compare:Cost:b175}
\includegraphics[width=0.45\textwidth]{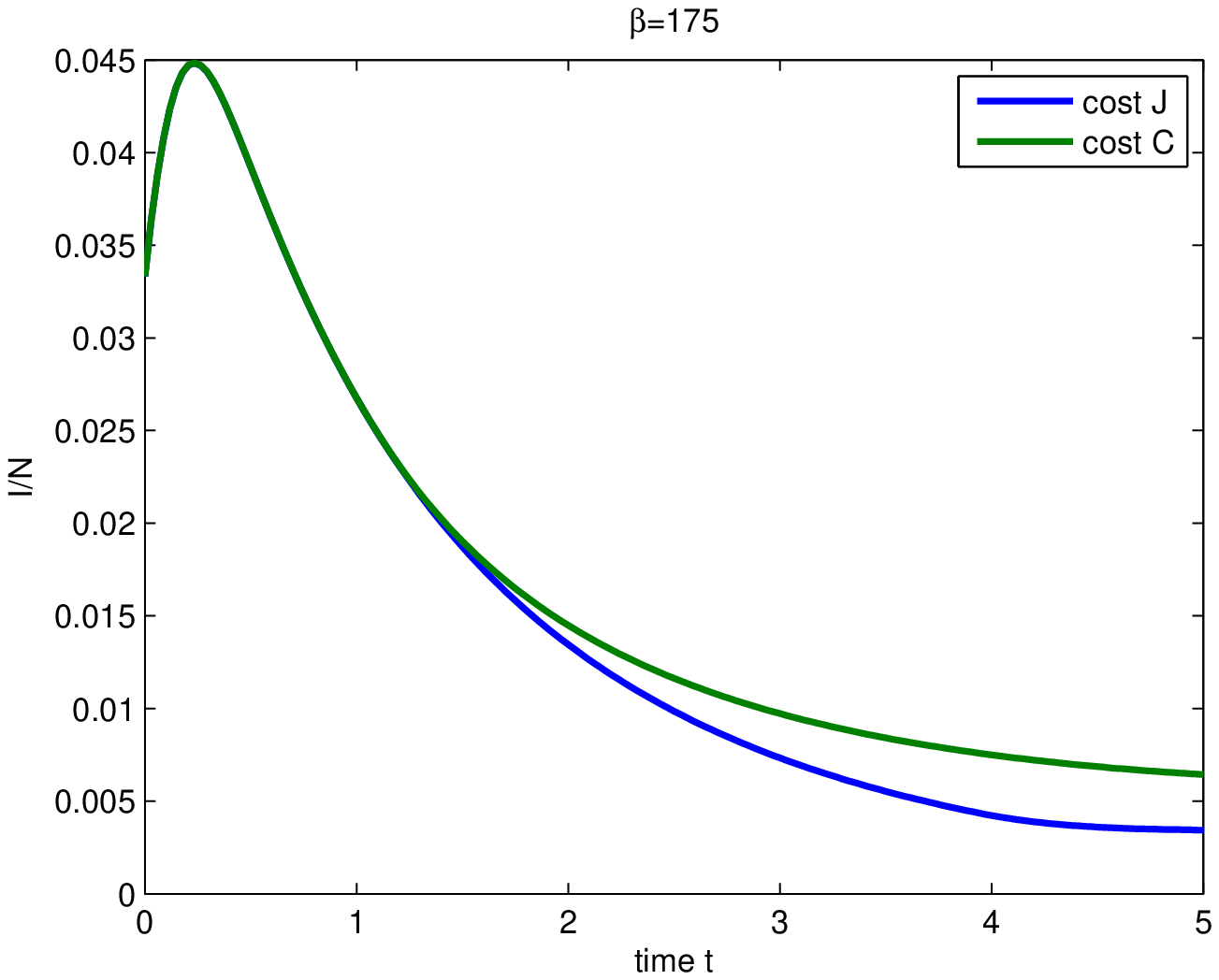}}
\subfloat[\footnotesize{Fraction $L_2/N$ of persistent latent individuals}]{\label{L2:compare:Cost:b175}
\includegraphics[width=0.45\textwidth]{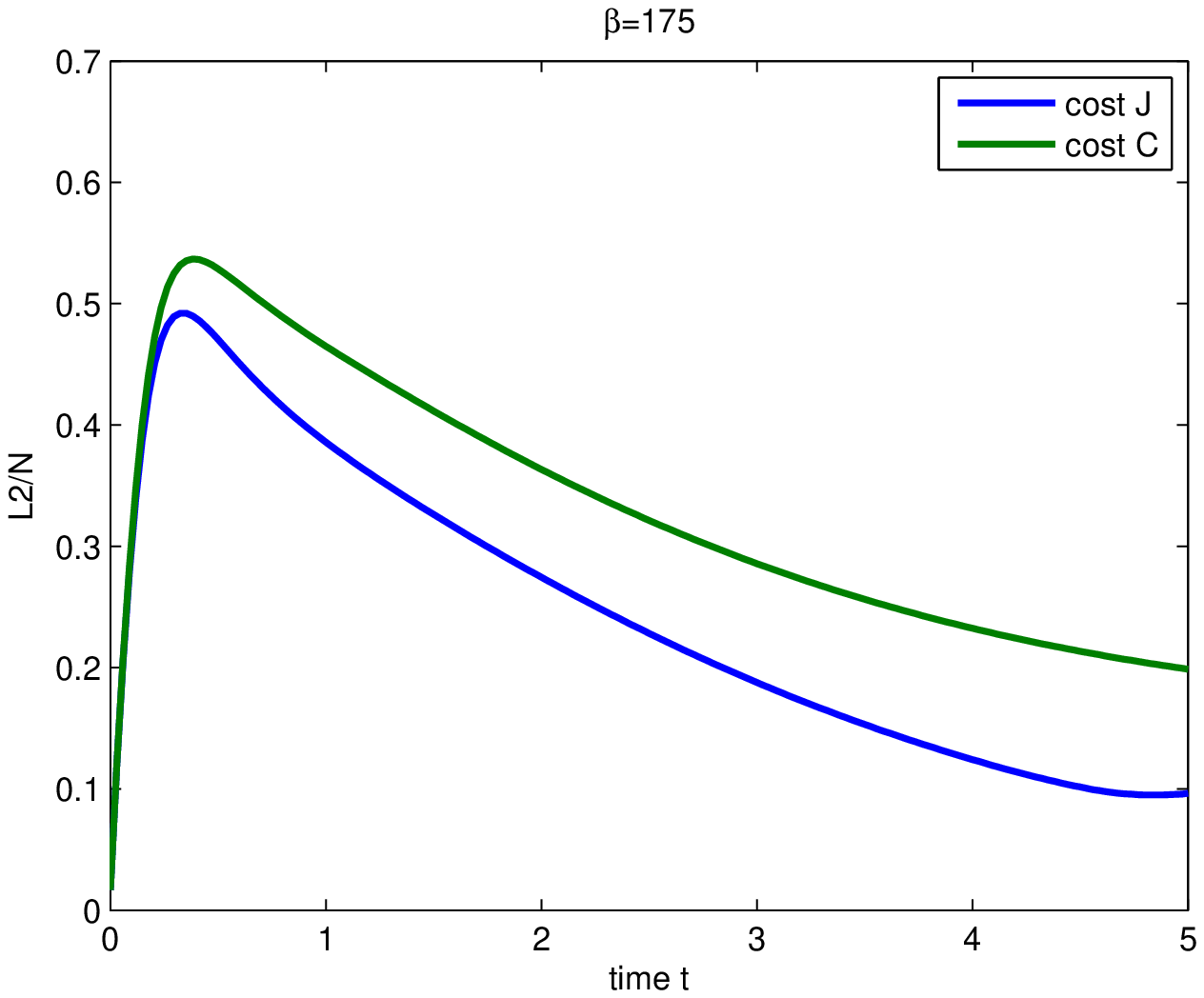}}
\caption{Comparison of active infectious $I$ and persistent latent $L_2$ individuals
for different cost functionals $J$ and $C$ ($\beta = 175$).}
\label{fig:I:L2:costs:b175}
\end{figure}

Let $N = 30000$, $\beta \in \{100, 175\}$, $\epsilon_1 = \epsilon_2 = 0.5$ and $W_1 = W_2 =500$.
When we compare the controls $u_1$ and $u_2$ for the cost functionals $J$ and $C$,
we observe that a bigger effort is required on the controls when we propose to minimize $I + L_2$.
If our aim is to minimize the active infected individuals $I$ as well as the cost
of the control measures represented by $u_1$ (preventive measures applied to active infected
individuals for a complete treatment with anti-TB drugs), then the control $u_1$ never attains
the maximum value and the control measure $u_2$ is not required, see Figure~\ref{fig:u_1:u_2:costs:b100}.

\begin{figure}[!htb]
\centering
\subfloat[\footnotesize{Control $u_1$}]{\label{u_1:compare:Cost}
\includegraphics[width=0.45\textwidth]{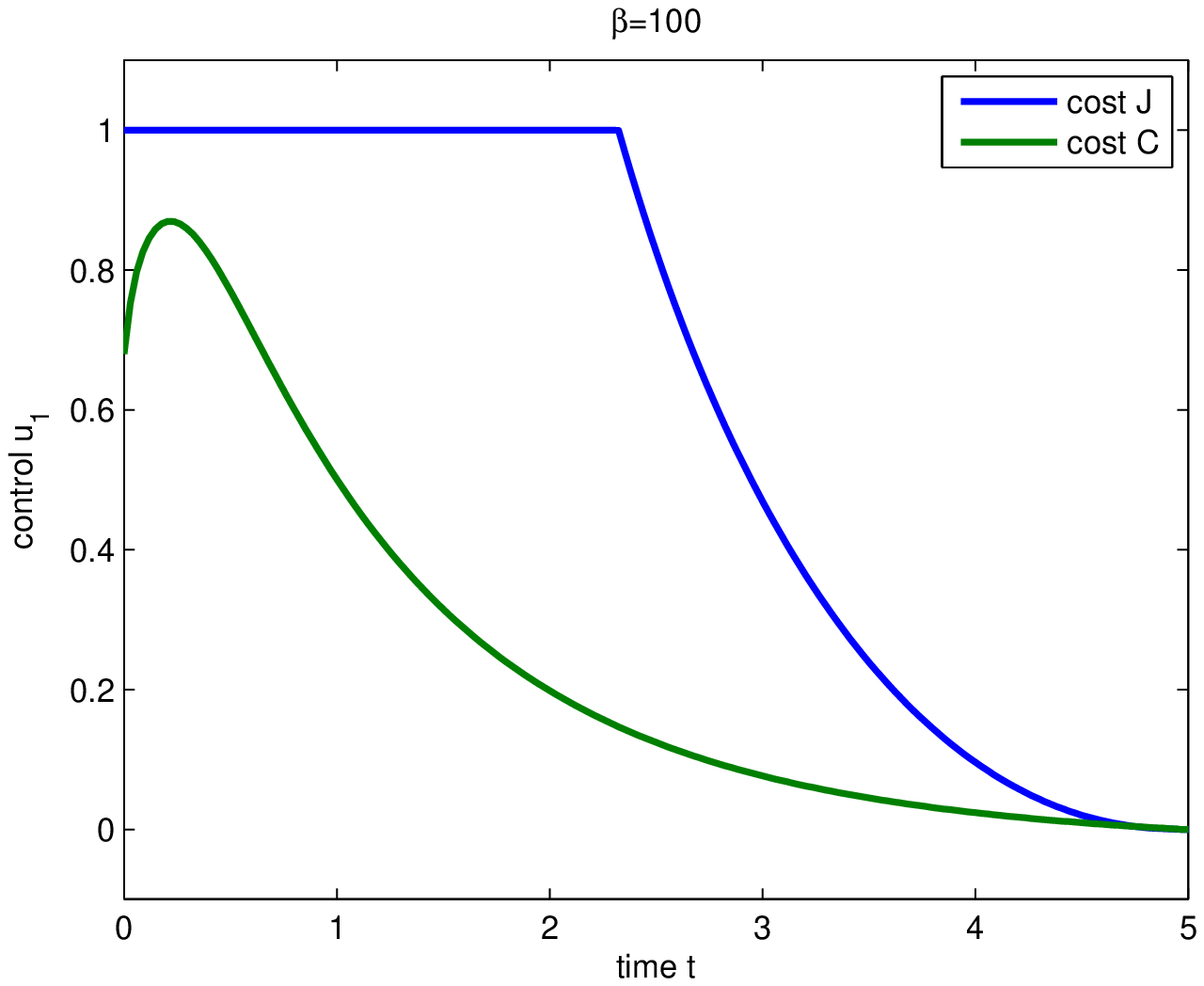}}
\subfloat[\footnotesize{Control $u_2$}]{\label{u_2:compare:Cost}
\includegraphics[width=0.45\textwidth]{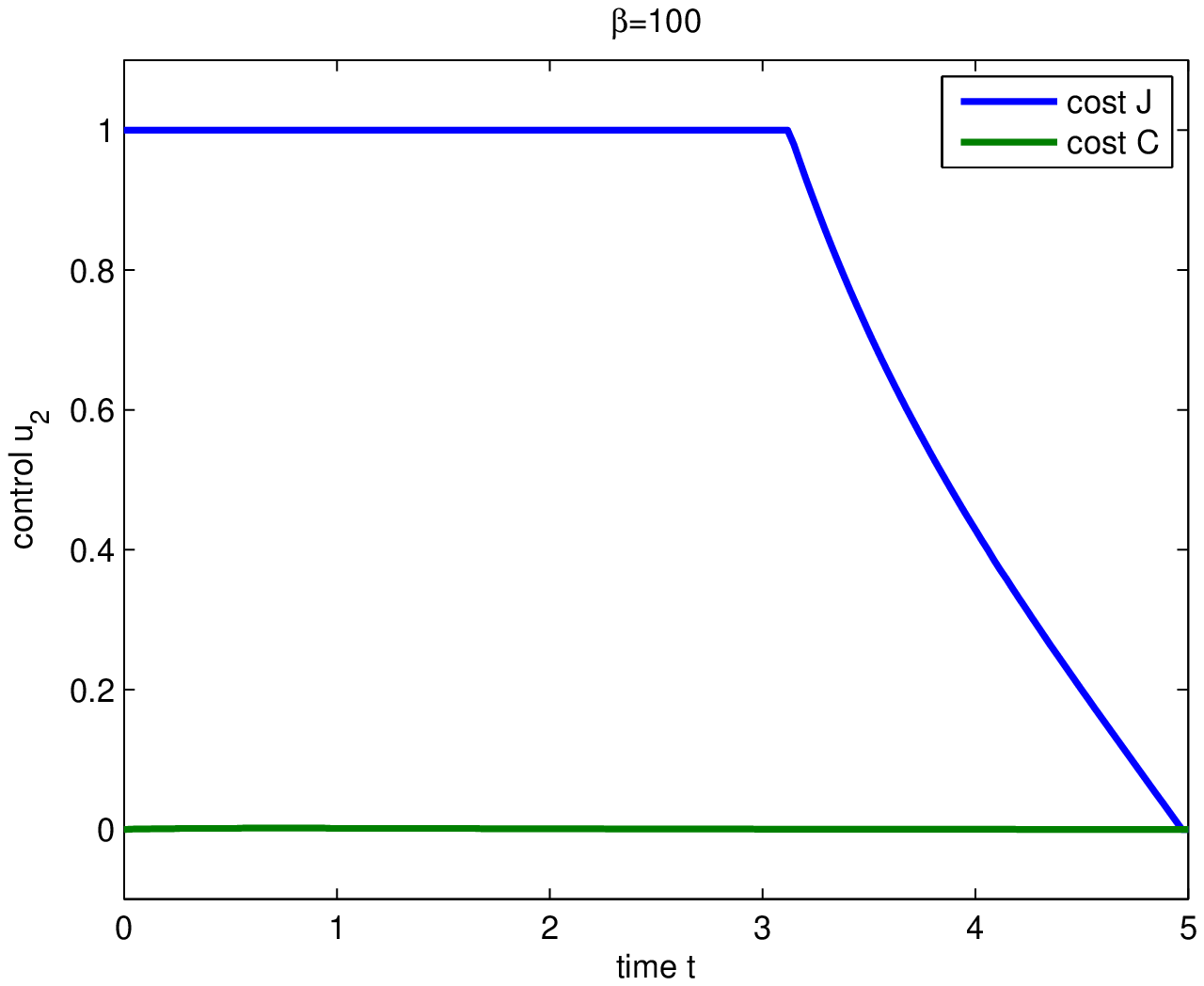}}
\caption{Comparison of controls $u_1$ and $u_2$ for different cost functionals $J$ and $C$ ($\beta = 100$).}
\label{fig:u_1:u_2:costs:b100}
\end{figure}

When $\beta = 175$, if we want to minimize the cost functional $C$
then the control $u_1$ attains the maximum value for approximately 1.1 years;
if we want to minimize the cost functional $J$
then the control $u_1$ attains the maximum value for approximately 3.9 years.
Analogously to the case $\beta = 100$, for $\beta = 175$
the control measure $u_2$ is not required when we wish
to minimize $C$, see Figure~\ref{fig:u_1:u_2:costs:b175}.

\begin{figure}[!htb]
\centering
\subfloat[\footnotesize{Control $u_1$}]{\label{u_1:compare:Cost:b175}
\includegraphics[width=0.45\textwidth]{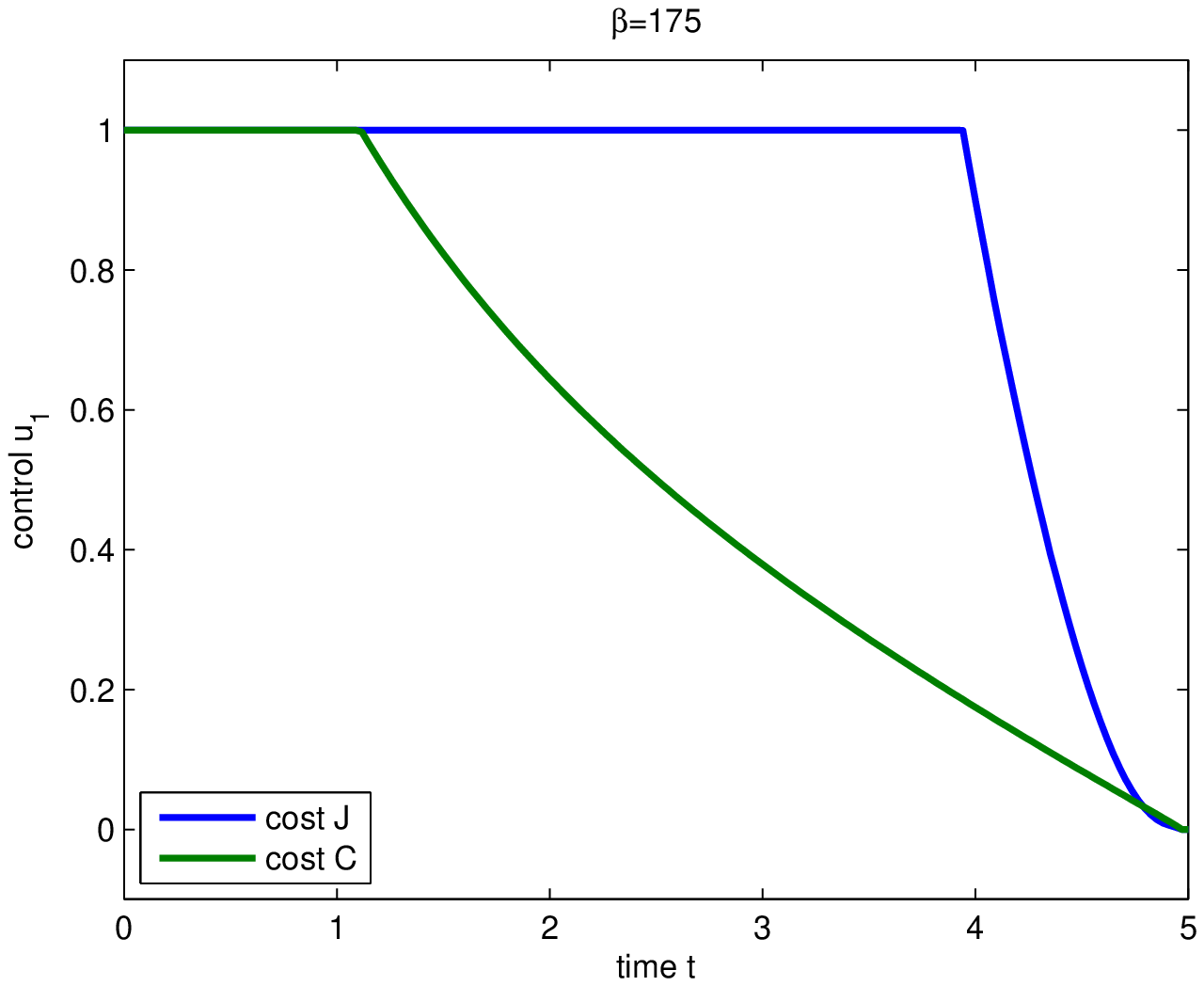}}
\subfloat[\footnotesize{Control $u_2$}]{\label{u_2:compare:Cost:b175}
\includegraphics[width=0.45\textwidth]{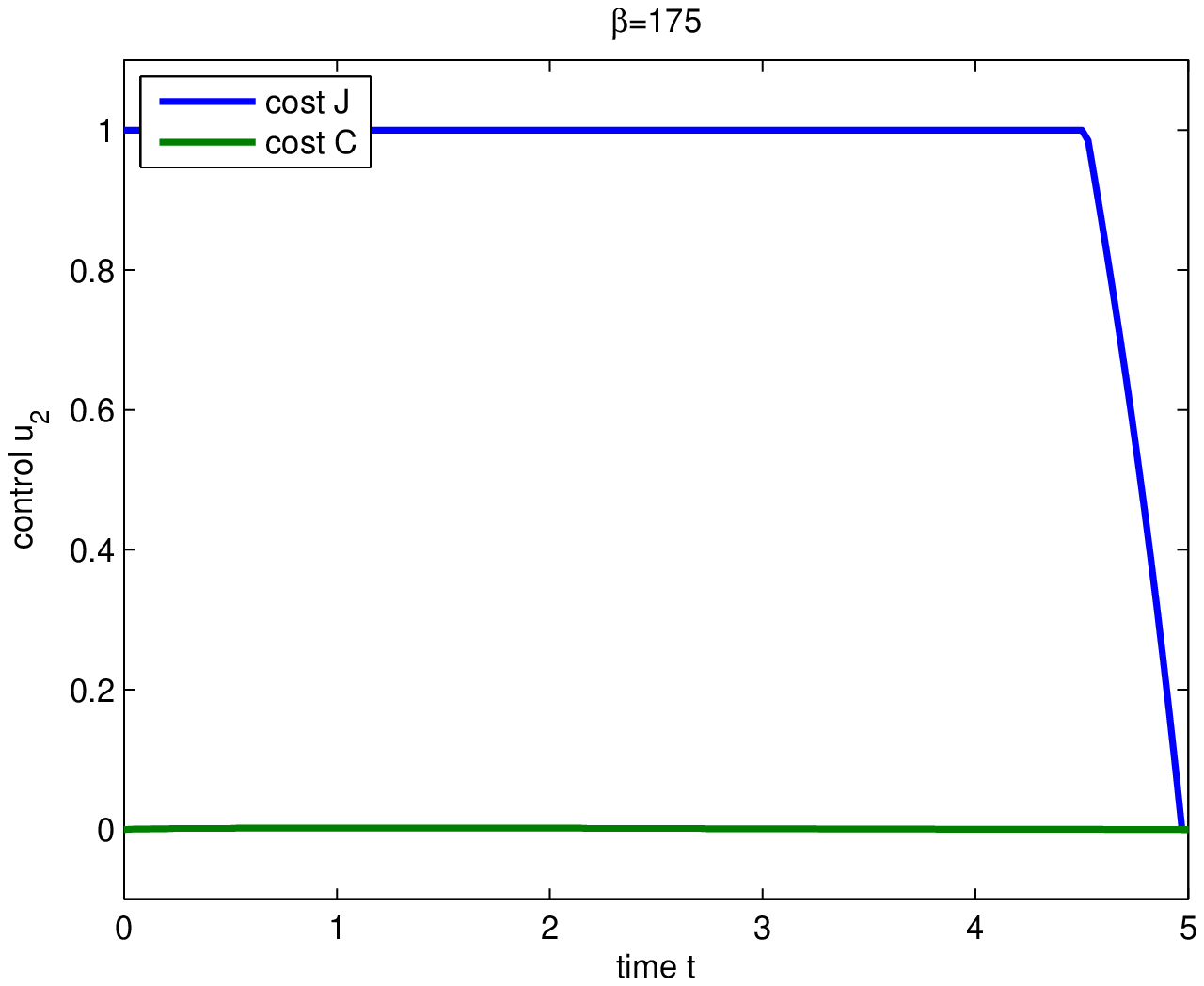}}
\caption{Comparison of controls $u_1$ and $u_2$ for different cost functionals $J$ and $C$ ($\beta = 175$).}
\label{fig:u_1:u_2:costs:b175}
\end{figure}
The bigger effort on the control measures associated to the cost functional $J$ is justified by
a reduction on the fraction of active infected individuals $I$ and persistent latent individuals
$L_2$, when compared to the case of minimizing $C$, see Figure~\ref{fig:I:L2:costs}.
This reduction is more significant if we consider $\beta = 175$, see Figure~\ref{fig:I:L2:costs:b175}.

Finally, we can compare the effect of the implementation of two controls strategies:
apply only the control $u_1$ (supervision and support of TB infectious individuals $I$);
and apply simultaneously $u_1$ and $u_2$. The treatment of persistent latent individuals $L_2$,
for example with a prophylactic vaccine, is not so usual as the treatment of active infectious individuals,
which is one of the measures proposed by the Direct Observation Therapy (DOT) of World Health Organization (WHO),
but is a valid TB treatment strategy \cite{Abu_Raddad:etal:2009,Cohen:etal:2006}.
Observing Figure~\ref{fig:I:L2:only:u1}, we conclude that each control $u_1$ and $u_2$ implies
a reduction on the respective fraction of the population, $I$ and $L_2$. Moreover,
if we choose to minimize the cost function \eqref{costfunction},
then the best choice is to apply both controls simultaneously,
since the implementation of control $u_2$ does not imply a reduction on the fraction
of active infectious individuals. For this reason,
if we choose to minimize the cost function \eqref{eq:np:co},
then the best control strategy is to implement only control $u_1$.

\begin{figure}[!htb]
\centering
\subfloat[\footnotesize{Fraction $I/N$ of infected individuals}]{\label{u1:b250}
\includegraphics[width=0.45\textwidth]{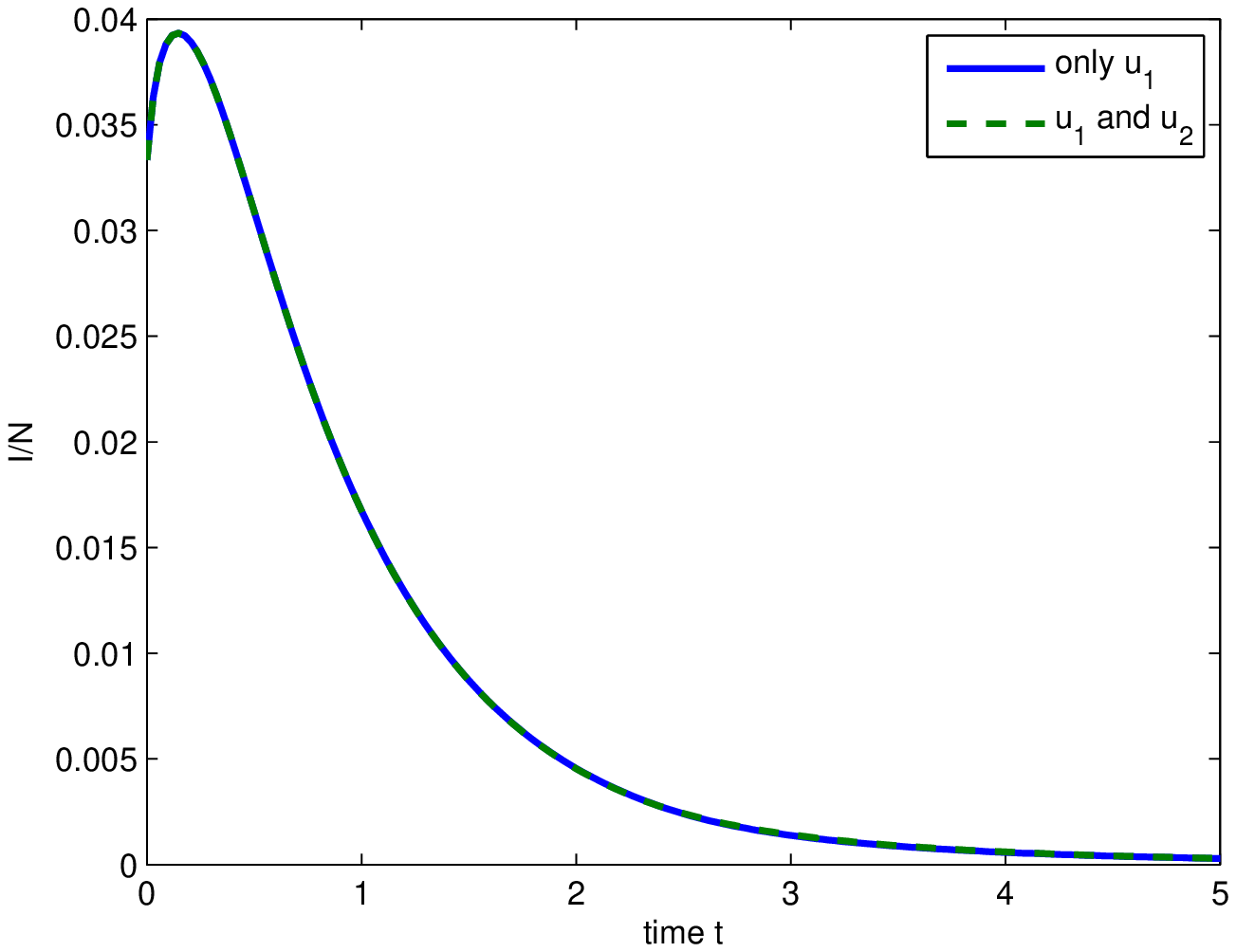}}
\subfloat[\footnotesize{Fraction $L_2/N$ of persistent latent individuals}]{\label{u2:b250}
\includegraphics[width=0.45\textwidth]{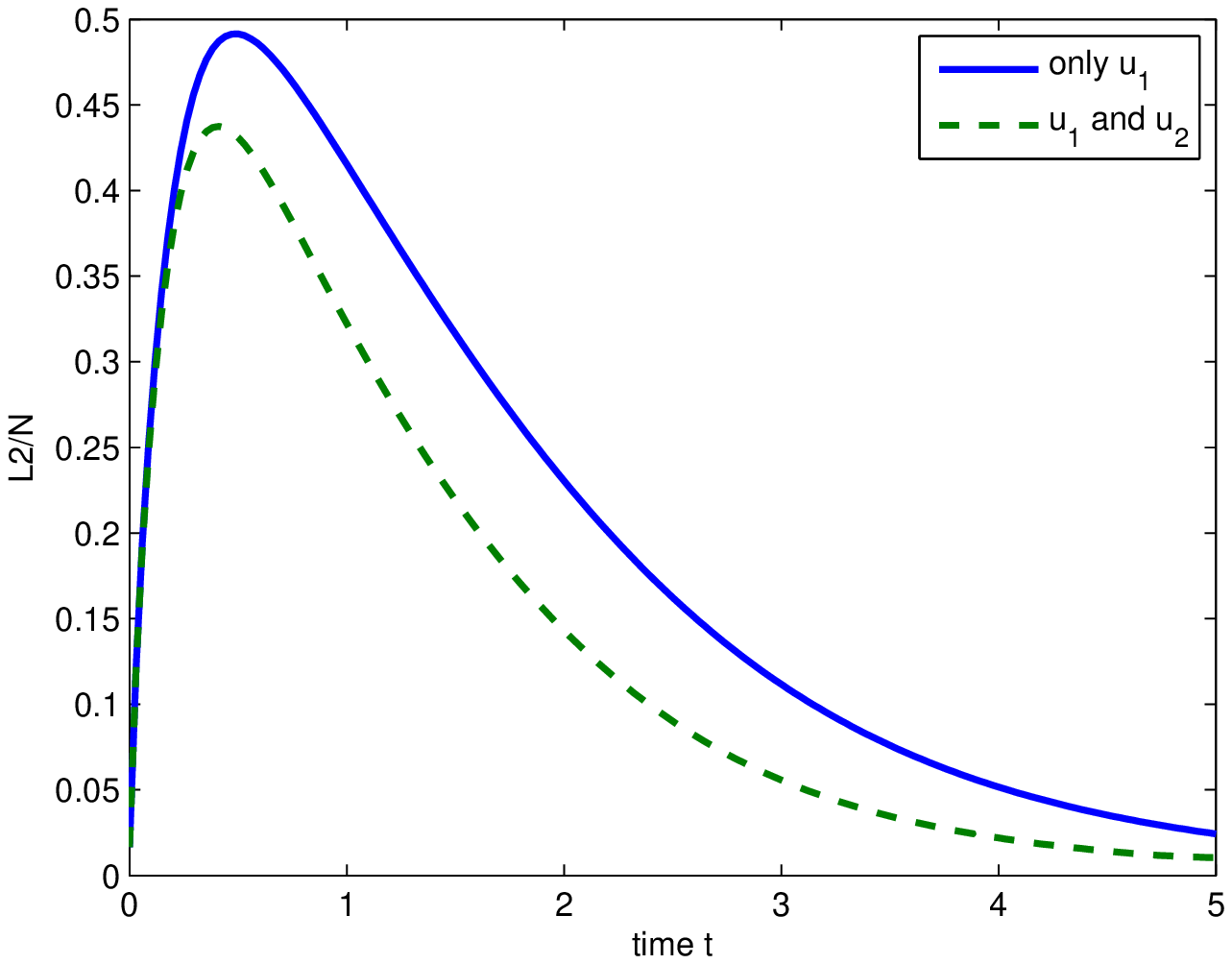}}
\caption{Comparison between applying only control $u_1$
and applying the two controls $u_1$ and $u_2$ simultaneously, with
$\beta = 100$, $\epsilon_1 = \epsilon_2 = 0.5$, $W_1= 500$, $W_2 = 50$, and $N=30000$.}
\label{fig:I:L2:only:u1}
\end{figure}


\section{Conclusion}
\label{sec:conc}

The incidence rates of TB have been declining since 2004 worldwide,
namely due to prevention and treatment policies
that have been applied in the last years \cite{WHO_2011}.
Mortality rates, at global level, fell down around 35\% between 1990 and 2009,
and if the current rate of decline is sustained, by 2015 the target of a 50\%
reduction can be achieved. The reduction of mortality and incidence rates depend
on the effort at country level to implement control policies.
Examples of policies that had a great success in WHO's six regions
are the DOTS strategy (1995-2005) and its sucessor
\emph{Stop TB} launched in 2006 (see \cite{WHO_2011} for further details).
In this paper we study a mathematical model for TB proposed in \cite{Gomes_etall_2007},
from the optimal control point of view. Optimal time-dependent prevention
policies, that consider the execution cost, are proposed.
We tried different numerical approaches and we observed that the results are the same,
independently of the method used. In particular, two approaches have been tried:
direct and indirect. The direct methods discretize the problem turning it
into a nonlinear optimization problem. Indirect methods use the Pontryagin Maximum Principle
\cite{Pontryagin_et_all_1962} as a necessary condition
to find the optimal curve for the respective control:
we substitute \eqref{optcontrols} into \eqref{adjoint_function} and
the obtained system of five equations is solved numerically
together with the five equations of system \eqref{modelGab_controls}.
Figures~\ref{fig:com:e:sem:controlos}
and \ref{fig:u1u2:com:e:sem:controlos} show that when controls are considered,
the optimal policies provide a reduction of 1230 active infectious and persistent latent individuals.
The cost execution of the control policies related to control $u_1$ is assumed to be greater
or equal to the one related to control $u_2$ (see Section~\ref{sec:num:results}).
Figures~\ref{fig:variar:W1} and \ref{fig:W1igualW2} show that when the cost of implementation
of control policies related to control $u_1$ decreases, the amount of $u_1$ increases,
and when the cost of implementation of control policies related to control $u_2$ decreases,
the amount of $u_2$ increases. We considered different values for the transmission
coefficient parameter $\beta$ corresponding to the case where the disease
may become endemic. We observe that as the transmission coefficient increases,
the period of time that the control $u_1$ (associated to the effort that prevents
the failure of treatment of active infectious individuals) is at the upper bound
also increases, as well as the fraction of active infectious and persistent
latent individuals (Figure~\ref{fig:variar:beta}). We assume that the total population
$N$ is constant and Figure~\ref{fig:variar:N} illustrates that optimal control strategies
do not vary significantly when different sizes of population are taken.
As we can see in Figure~\ref{fig:variar:epsilon}, the measures of the efficacy
of control policies $\epsilon_1$ and $\epsilon_2$ strongly influence the effect
of the control policies related to $u_1$ and $u_2$ on the minimization
of the number of active infectious and persistent latent individuals.

As future work, it would be interesting to consider different values for the parameters
$\tau_1$ and $\tau_2$, and observe the variations on the optimal control strategies.
In addition, we intend to study optimal control strategies for the minimization
of the fraction of active infectious and/or persistent latent individuals,
when susceptibility to reinfection of treated individuals differs from that of latent:
$\sigma_R < \sigma$ or $\sigma_R > \sigma$. Another direction of research consists
to study TB/HIV co-infections.


\section*{Acknowledgments}

Work supported by {\it FEDER} funds through
{\it COMPETE} --- Operational Programme Factors of Competitiveness
(``Programa Operacional Factores de Competitividade'')
and by Portuguese funds through the
{\it Center for Research and Development
in Mathematics and Applications} (University of Aveiro)
and the Portuguese Foundation for Science and Technology
(``FCT --- Funda\c{c}\~{a}o para a Ci\^{e}ncia e a Tecnologia''),
within project PEst-C/MAT/UI4106/2011
with COMPETE number FCOMP-01-0124-FEDER-022690.
Silva was also supported by FCT through
the post-doc fellowship SFRH/BPD/72061/2010;
Torres by the OCHERA project PTDC/EEI-AUT/1450/2012.

The authors are very grateful to two anonymous referees,
for valuable remarks and comments, which
significantly contributed to the quality of the paper.



\end{document}